\begin{document}

\maketitle

\section{Introduction}
The probability density and distribution function are important characteristics of any probability distribution. The possibility to calculate these quantities gives an opportunity to solve a wide range of probabilistic and statistical problems, ranging from the calculation of the probability of accepting a particular value for some random variable and ending with estimates of distribution parameters from experimental data. The absence of explicit expressions of these characteristics, except in a few cases, makes the class of stable laws inconvenient to use. The only exception is the five cases where the density of the stable law is expressed in terms of elementary functions: the Levy distribution   ($\alpha=1/2,\theta=1$), the symmetric Levy distribution ($\alpha=1/2,\theta=-1$), the Cauchy distribution  ($\alpha=1,\theta=0$), the Gaussian distribution ($\alpha=2,\theta=0$) and  the generalized Cauchy distribution  ($\alpha=1,-1\leqslant\theta\leqslant1$). The last distribution was first written out in the monograph by V.M. Zolotarev (see~\cite{Zolotarev1986} \S2.3 formula (2.3.5a)) and later it was studied in the works \cite{Saenko2020c,Saenko2020b}.  Here $\alpha$ is the characteristic parameter of the stable law  ($0<\alpha\leqslant2$), $\theta$  is the asymmetry parameter ($-1\leqslant\theta\leqslant1$). For other values of the parameters $\alpha$ and $\theta$ it is necessary to use different representations for the probability density of stable laws.

The most common method for calculating the probability density is based on the use of the integral representation. The inverse Fourier transform of the characteristic function of the stable law makes it possible to obtain two types of integral representations. The integral representations that express the probability density in terms of an improper integral of an oscillating function are referred to the first type. The integral representations expressing the probability density in terms of a definite integral of a monotonic function belong to the second type. The second method of the inverse Fourier transform is called the stationary phase method. Each of these two types of integral representations has both advantages and disadvantages.

Integral representations related to the first type were obtained and studied in the works \cite{Nolan1999,Ament2018}, for the characteristic function in the parameterization <<M>>. Here and further in the text, the designation of various parametrizations of the characteristic function of a stable law corresponds to the designations introduced in the book by V.M. Zolotarev \cite{Zolotarev1986}. The main difficulty in using this representation lies in the oscillating integrand. This behavior of the function leads to calculation difficulties for numerical integration algorithms. The work \cite{Nolan1999} indicates the following limitations when using this representation: 1)~in the case $\alpha<0.75$ the integration domain becomes very large, which leads to difficulties in numerical integration; 2)~if $\beta\neq0$ и $0<|\alpha-1|<0.001$ there are problems in calculating the term with  $(\tan(\pi\alpha/2)(t-t^\alpha)$; 3)~when $x$ is very large, the integrand oscillates very quickly.  To eliminate these difficulties, in the article \cite{Ament2018} the standard quadrature method of numerical integration is modernized to adapt it for calculating integrals of oscillating functions. This made it possible to reduce the lower limit of the parameter $\alpha$ from the value  0.75 to the value   0.5. To calculate the probability density for large $x$ in the article \cite{Ament2018} there is a suggestion of using the corresponding representation for the probability density in the form of a power series. This approach makes it possible to solve the problem of calculating the density for large $x$ completely.  At values $x\to\zeta$ in the case $\alpha>0.5$ the integrand behaves quite well, and therefore there are no problems with the calculation of the integral. However, the article points out that the proposed scheme is not applicable for symmetric distributions in the case of $\alpha<0.5$ and for non-asymmetric distributions in the cases $\alpha<0.5$ and $0.9<\alpha<1.1$.

The second type of integral representations for the probability density of stable laws is widely used. These integral representations express the density in terms of a definite integral of a monotonic function. This behavior of the integrand turns out to be more convenient for practical purposes. For the first time, such an integral representation was obtained in the article \cite{Zolotarev1964_en}. It expresses the probability density of the stable law for the characteristic function in the parameterization <<B>> through a definite integral. Later, the results of this article were included in the monographs \cite{Zolotarev1986} (see~\S2.2) and \cite{Uchaikin1999} (see~\S4.4, \S4.5). For the parameterization<<M>>, a similar representation was obtained in the articles \cite{Nolan1997, Nolan2022}, and for the parameterization <<C>> in  \cite{Saenko2020b}. Since the integrands of these representations are monotonic, there are no problems with the calculation of these integrals in a wide range of coordinates. In this regard, these integral representations are widely used in problems of calculating the probability density and the distribution function of stable laws. The integral representation for the parameterization <<M>> has gained in special popularity. Several software products have been developed based on this idea \cite{Liang2013,Royuela-del-Val2017,Julian-Moreno2017,Rimmer2005,Veillette2008}.

In a wide range of values of the coordinate $x$ there are no problems with the calculation of the density using these integral representations. However, there are domains of the coordinate $x$, where numerical integration algorithms cannot calculate these integrals. This is pointed out in the articles \cite{Nolan1997,Pogany2015,Royuela-del-Val2017,Ament2018}. The reason for these difficulties lies in the behavior of the integrand. For very small or large values of $x$ the integrand has the form of a very narrow peak. As a result, numerical integration algorithms cannot recognize it and start producing an incorrect result. Various numerical algorithms are proposed in the articles \cite{Nolan1997,Royuela-del-Val2017,Julian-Moreno2017} to solve this problem. However, all these algorithms increase the accuracy of calculations, but do not eliminate the problem completely. To solve the problem of calculating the probability density for small or large values of $x$ it is expedient to use such representations for the density that do not have any singularities in these areas. Power series representations for the probability density are the most suitable option for this purpose.

These representations are obtained by expanding the probability density in a power series in the vicinity of the points $x=0$ and $x=\infty$.  Such representations are well known. One of the first papers in which such an expansion was obtained is the work \cite{Pollard1946}. In this work, the expansion of the probability density into a convergent series is obtained in the case of $x\to\infty$ and $0<\alpha<1$. In the article \cite{Bergstrom1952} a generalization of this expansion was obtained at $x\to\infty$ in the case $1<\alpha<2$. In this range of values of the parameter $\alpha$ this series turns out to be asymptotic. In the same article, the expansion of the density in a series was obtained in the vicinity of the point $x\to0$ for the case $0<\alpha<2$.  The obtained power series is asymptotic in the case of $0<\alpha<1$, and it is convergent in the case of $1<\alpha<2$. The expansions for $\alpha>1$ in the cases $x\to0$ and $x\to\infty$ were also obtained in the paper \cite{Schneider1986} as a result of the expansion of the probability density into a power series expressed in terms of the Fox function. All these expansions can be found in the books \cite{Feller1971_V2_en} (see Chapter 17, \S7), \cite{Zolotarev1986} (see \S2.4 and \S2.5) and \cite{Uchaikin1999} (see \S4.4, \S4.5). We should note that all these results refer to stable laws with a characteristic function in the parameterization <<B>>.  For the parametrization <<M>>, expansions in a power series were obtained in the article \cite{Ament2018}. For the parameterization <<C>>, expansions are given in the works \cite{Bening2006,Saenko2022b}. A characteristic property of all these expansions is that, depending on the value of the exponent $\alpha$ power series are either convergent or asymptotic. It should be noted that there is a way of obtaining representations in the form of convergent power series for all admissible values of the parameter $\alpha$. This method is described in the article \cite{Arias-Calluari2017} and assumes the use of trans-stable laws. Using this approach, expansions in a power series for the probability density of a symmetric stable law at $x\to0$ and $x\to\infty$ were obtained in this article for the cases $0<\alpha<1$ and $1<\alpha<2$. At the same time one thing should be pointed out is that these series are convergent.

This article deals with the problem of calculating the probability density of a strictly stable law with a characteristic function
\begin{equation}\label{eq:CF_formC}
  \hat{g}(t,\alpha,\theta,\lambda)=\exp\left\{-\lambda |t|^\alpha\exp\{-i\tfrac{\pi}{2}\alpha\theta\sign t\}\right\},\quad t\in\mathbf{R}
\end{equation}
in the case $x\to\infty$. Here $\alpha\in(0,2]$, $|\theta|\leqslant\min(1,2/\alpha-1)$, $\lambda>0$. This characteristic function corresponds to the parameterization <<C>>. In the article \cite{Saenko2020b} using the stationary phase method, the inverse Fourier transform of this characteristic function was performed and integral representations for the probability density and distribution function (see~Appendix~\ref{sec:IntRepr}) were obtained. As we can see the formula (\ref{eq:g(x)_int}) expresses the probability density in terms of a definite integral. Moreover, the integrand is a monotonic function and behaves quite well in a large range of values of $x$.  Therefore, in a fairly wide range of values of $x$ there are no difficulties with the numerical calculation of this integral. However, for very small and very large values of $x$ the integrand in (\ref{eq:g(x)_int}) takes the form of a very narrow peak. As a result, numerical integration algorithms cannot recognize this peak and produce an incorrect integration result. This article will consider the problem of calculating the probability density at $x\to\infty$. To solve this problem, the expansion of the probability density in a series at $x\to\infty$ will be obtained and the range of applicability of this series will be examined.  It should be noted that the problem of calculating the probability density in the case of $x\to0$ will not be considered here, since it was considered earlier in the article \cite{Saenko2022b}.

\section{The representation of the probability density in the form of  a power series}

We will undertake the task of obtaining the expansion of the probability density in a power series in the case of $x\to\infty$. In the Introduction, it was pointed out that similar expansions for stable laws with a characteristic function in the parameterization <<B>> \cite{Pollard1946,Bergstrom1952,Schneider1986,Zolotarev1986,Feller1971_V2_en}  and <<M>> \cite{Ament2018} were obtained earlier and are well known. Depending on the value of the characteristic exponent $\alpha$ these power series turn out to be either convergent or asymptotic. In the case considered here ($x\to\infty$) at the values $0<\alpha<1$ the representation of the probability density in the form of a power series turns out to converge at $N\to\infty$, and the values  $1<\alpha<2$ are asymptotic at $x\to\infty$. In this regard, when deriving these expansions, the cases of $\alpha<1$ and $\alpha>1$ are considered separately.  In addition, the existing evidence for obtaining the expansion of the density of the stable law in a power series at $x\to\infty$ contain some flaws, without correcting which it is impossible to provide a rigorous proof. Therefore, we obtain the expansion for the probability density in the case of $x\to\infty$, along the way correcting some flaws in the proofs. We will initially consider the entire range of values of the characteristic index $\alpha$. This will give an opportunity to show that the expansion obtained is valid for all admissible $\alpha$. Further, it will be shown that they are convergent at $0<\alpha<1$ and asymptotic at $1<\alpha<2$.

Without loss of generality, further we will assume everywhere that $\lambda=1$. Strictly stable laws with the parameter $\lambda=1$ are usually called standard strict stable laws. Abbreviations have been adopted for these laws. The characteristic function is usually denoted $\hat{g}(t,\alpha,\theta,1)\equiv\hat{g}(t,\alpha,\theta)$, the probability density - $g(x,\alpha,\theta,1)\equiv g(x,\alpha,\theta)$, a strictly stable random variable $Y(\alpha,\theta,1)\equiv Y(\alpha,\theta)$. Further, we will use these notations for standard strictly stable quantities. It should be noted that to transform a standard strictly stable law into a strictly stable law with an arbitrary $\lambda$ one can use the results of the article \cite{Saenko2020b} (see also \cite{Zolotarev1986, Uchaikin1999}).

To obtain the expansion, we use the formula for inverting the characteristic function and, then, expand the integrand into a power series. This will give an opportunity to calculate the integral and get the expansion of the probability density in a power series. To invert the characteristic function, we use  lemma~\ref{lem:Inverse} (see Appendix~\ref{sec:IntRepr}). This lemma formulates two formulas for the inversion of the characteristic function. In principle, there is no big difference which of these two formulas to use. The result will differ only by the sign of the parameter $\theta$. In this article we will use the first formula (\ref{eq:InverseFormula}). Thus,
\begin{equation*}
  g(x,\alpha,\theta)=\frac{1}{\pi}\Re\int_{0}^{\infty}e^{itx}\hat{g}(t,\alpha,-\theta).
\end{equation*}
We also make use of the inversion property which can be formulated for a stable law with a characteristic function (\ref{eq:CF_formC}) in the form
\begin{property}\label{prop:Inversion}
For any admissible parameters $(\alpha,\theta)$
\begin{equation*}
  Y(\alpha,-\theta)\stackrel{d}{=}-Y(\alpha,\theta).
\end{equation*}
\end{property}
The proof of this property is given in the article \cite{Saenko2020b} (see also \cite{Uchaikin1999, Zolotarev1986}).   In terms of the characteristic function $\hat{g}(t,\alpha,\theta)$ and probability density $g(x,\alpha,\theta)$  this property takes the form
\begin{equation}\label{eq:InversionFormula}
  \hat{g}(-t,\alpha,\theta)=\hat{g}(t,\alpha,-\theta),\quad g(-x,\alpha,\theta)=g(x,\alpha,-\theta).
\end{equation}
The convenience of this property lies in the fact that, when studying the probability density, it gives us an opportunity to confine ourselves to considering only the case $x\geqslant0$. Expressions for the case $x<0$ are obtained with the use of the expressions mentioned above.

Next, we need an analytic continuation of the function (\ref{eq:CF_formC}) to the complex plane $z=t e^{i\varphi}$. In the book \cite{Zolotarev1986} (see\S~2.2) it was shown that such an analytic continuation can be performed with the half-line $t>0$ (or the half-line $t<0$) with a cut along some contour connecting the points  0 and $\infty$. We perform analytic continuation with the half-line $t>0$ with a cut along the negative part of the real half-line $\arg z=\pi$. We will denote the analytic continuation thus obtained
\begin{equation*}
  g^{+}(z,\alpha,\theta)=\exp\left\{-z^\alpha\exp\left\{-\tfrac{\pi}{2}\alpha\theta\right\}\right\}.
\end{equation*}

We will start our study with consideration of the integral
\begin{equation}\label{eq:I_Gamma}
  I(\Gamma)=\frac{1}{x\pi}\int_{\Gamma} e^{i\zeta}g^{+}(\zeta/x,\alpha,-\theta)d\zeta
\end{equation}
The following lemma is valid for this integral
\begin{lemma}\label{lem:I_GammaR}
For any admissible values of the parameters $\alpha$ and $\theta$ and for any positive $\epsilon\to0$ in the case $x\to\infty$,
if the conditions are met:
\begin{enumerate}
  \item $0<\alpha<1$ and $-1\leqslant\theta\leqslant1$ the contour $\Gamma_R$ has the form $\Gamma_R=\left\{\zeta:\ \tau=R,\ 0\leqslant\varphi\leqslant\pi-\epsilon \right\}$, where $R\to\infty$ in such a way that $R/x\to\infty$;
  \item $\alpha=1$ and $-1<\theta<1$, the contour $\Gamma_R$ has the form $\Gamma_R=\left\{\zeta:\ \tau=R,\ 0\leqslant\varphi\leqslant\pi-\epsilon \right\}$, where $R\to\infty$ in such a way that $R/x\to\infty$;
  \item $1<\alpha\leqslant2$, the contour $\Gamma_R$ has the form $\Gamma_R=\left\{\zeta:\ \tau=R,\ -\frac{\pi}{2\alpha}-\frac{\pi}{2}\theta+\epsilon\leqslant\varphi\leqslant
      \frac{\pi}{2\alpha}-\frac{\pi}{2}\theta\right\}$, where $R\to\infty$ in such a way that  $R/x\to\infty$;
  \item $0<\alpha\leqslant2$, the contour $\Gamma_R$ has the form $\Gamma_R=\left\{\zeta:\ \tau=R,\ 0+\epsilon\leqslant\varphi\leqslant\pi-\epsilon\right\}$, where $R\to\infty$ in such a way that   $R/x\approx 1$;
  \item $0<\alpha\leqslant2$, the contour $\Gamma_R$ has the form $\Gamma_R=\left\{\zeta:\ \tau=R,\ 0\leqslant\varphi\leqslant\pi \right\}$, where $R\to\infty$ in such a way that  $R/x\to0$;
  \item $0<\alpha\leqslant2$, the contour $\Gamma_R$ has the form $\Gamma_R=\left\{\zeta:\ \tau=R,\ -\pi\leqslant\varphi\leqslant\pi\right\}$, where $R\to0$,
\end{enumerate}
then
\begin{equation*}
  I(\Gamma_R)=\frac{1}{\pi x}\int_{\Gamma_R} e^{i\zeta}g^{+}(\zeta/x,\alpha,-\theta)d\zeta\to0.
\end{equation*}
Here $\zeta=\tau e^{i\varphi}$,
\end{lemma}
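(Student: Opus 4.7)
The plan is to parametrize $\zeta = Re^{i\varphi}$ on each arc (so $d\zeta = iRe^{i\varphi}d\varphi$) and estimate the modulus of the integrand through the two basic identities
\begin{equation*}
|e^{i\zeta}| = e^{-R\sin\varphi}, \qquad |g^{+}(\zeta/x,\alpha,-\theta)| = \exp\bigl\{-(R/x)^{\alpha}\cos(\alpha\varphi + \tfrac{\pi}{2}\alpha\theta)\bigr\}.
\end{equation*}
This reduces each of the six cases to verifying that
\begin{equation*}
|I(\Gamma_R)| \;\leq\; \frac{R}{\pi x}\int_{\varphi_{1}}^{\varphi_{2}}\exp\bigl\{-R\sin\varphi - (R/x)^{\alpha}\cos(\alpha\varphi + \tfrac{\pi}{2}\alpha\theta)\bigr\}\,d\varphi \;\longrightarrow\; 0
\end{equation*}
under the stated regime.

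Cases 3--6 are essentially direct Jordan-type estimates once the geometry of the contour is taken into account. In case 3 the endpoints are exactly the zeros of $\cos(\alpha\varphi + \tfrac{\pi}{2}\alpha\theta)$, so after the $\epsilon$-trim on the left the cosine is nonnegative and bounded below by $\sin(\alpha\epsilon)>0$ away from the upper endpoint; the factor $(R/x)^{\alpha}\to\infty$ then forces decay, and the small neighbourhood of the upper endpoint is absorbed by a straightforward change of variable. In case 4, $(R/x)^{\alpha}$ is bounded so the $g^{+}$ factor is uniformly bounded, while $\sin\varphi \geq \sin\epsilon > 0$ makes $e^{-R\sin\epsilon}$ dominate the bounded prefactor. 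In case 5, $g^{+}(\zeta/x,\alpha,-\theta)\to 1$ uniformly in $\varphi$ and Jordan's inequality $\int_{0}^{\pi} e^{-R\sin\varphi}\,d\varphi \leq \pi/R$ yields the bound $|I(\Gamma_R)|\leq C/x \to 0$. In case 6, the integrand is uniformly bounded in a neighbourhood of $0$ and the arc length is $2\pi R\to 0$.

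The genuine obstacle is cases 1 and 2, where the contour reaches up to $\pi-\epsilon$ and $\cos(\alpha\varphi + \tfrac{\pi}{2}\alpha\theta)$ can change sign: it is strictly positive at $\varphi = 0$ (since $|\tfrac{\pi}{2}\alpha\theta| < \pi/2$ under the stated restrictions on $\alpha,\theta$, which is also why case~2 excludes $|\theta|=1$), but may become negative before the upper endpoint is reached. The plan is to split the arc at the first zero $\varphi_{*} = \tfrac{\pi}{2\alpha} - \tfrac{\pi}{2}\theta$ of the cosine. On $[0,\min(\varphi_{*},\pi-\epsilon)]$ the $g^{+}$ factor is $\leq 1$; near $\varphi = 0$ the positivity of the $g^{+}$-exponent together with $(R/x)^{\alpha}\to\infty$ compensates for the vanishing of $\sin\varphi$, and for the remainder of the subinterval Jordan's inequality $\sin\varphi \geq 2\varphi/\pi$ handles the $e^{-R\sin\varphi}$ part. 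On $[\varphi_{*},\pi-\epsilon]$ (when nonempty) the $g^{+}$ factor grows, but $\sin\varphi$ is bounded below by a positive constant depending only on $\epsilon$ and $\theta$, and the rate comparison
\begin{equation*}
\frac{R}{(R/x)^{\alpha}} = x^{\alpha}\,R^{1-\alpha} \to \infty \quad (\alpha<1), \qquad \frac{R}{R/x} = x \to \infty \quad (\alpha = 1),
\end{equation*}
ensures that $R\sin\varphi$ dominates $(R/x)^{\alpha}|\cos(\alpha\varphi + \tfrac{\pi}{2}\alpha\theta)|$. It is precisely this rate comparison that forces the restrictions $\alpha<1$ in case~1 and $\alpha=1$, $|\theta|<1$ in case~2, and this step is where I expect the main technical work to lie.
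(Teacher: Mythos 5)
Your proposal is correct and follows essentially the same route as the paper: bound $|I(\Gamma_R)|$ by $\int_{\varphi_1}^{\varphi_2} V\,d\varphi$ with $V=\frac{R}{\pi x}\exp\{-R\sin\varphi-(R/x)^{\alpha}\cos(\alpha(\varphi+\tfrac{\pi}{2}\theta))\}$, then run the same case analysis over the regimes of $R/x$ and the sign structure of the two terms in the exponent, arriving at exactly the same endpoint exclusions (the cosine failing to be positive at $\varphi=\pi$ in cases 1--2, the sine condition at $\varphi=\frac{\pi}{2\alpha}-\frac{\pi}{2}\theta$ versus divergence at $-\frac{\pi}{2\alpha}-\frac{\pi}{2}\theta$ in case 3). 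The only difference is one of rigor: the paper contents itself with pointwise limits of $V$ at interior points and at the endpoints and then asserts that the integral vanishes, whereas you supply the uniform estimates --- the split at the first zero $\varphi_*$ of the cosine, Jordan's inequality, and the explicit rate comparisons $R$ versus $(R/x)^{\alpha}$ --- needed to pass from pointwise decay of the integrand to decay of the integral, so your version is, if anything, the more complete form of the same argument.
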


\begin{proof} 
We will consider the integral (\ref{eq:I_Gamma}) and choose in the complex plane  $\zeta=\tau e^{i\varphi}$. As an integration contour we will choose a circle arc of radius $R$
\begin{equation*}
  \Gamma_R=\left\{\zeta:\ \tau=R,\ \varphi_1\leqslant\varphi\leqslant\varphi_2\right\}.
\end{equation*}
For any contour $\Gamma_R$ the inequality is valid
\begin{equation*}
  I(\Gamma_R)\leqslant|I(\Gamma_R)|\leqslant\frac{1}{\pi x}\int_{\Gamma_R}|e^{i\zeta}g^+(\zeta/x,\alpha,-\theta)d\zeta|.
\end{equation*}
Assuming $\zeta=\tau e^{i\varphi}$, we get
\begin{multline}\label{eq:I_GammaR}
  I(\Gamma_R)\leqslant \frac{1}{\pi x}\int_{\varphi_1}^{\varphi_2}\left|e^{iR\exp\{i\varphi\}} g^+\left(\frac{R}{x}e^{i\varphi},\alpha,-\theta\right)iRe^{i\varphi}d\varphi\right| \\
  =\frac{1}{\pi x}\int_{\varphi_1}^{\varphi_2} \left|R\exp\left\{iRe^{i\varphi}- (R/x)^\alpha \exp\left\{i\alpha(\varphi+\tfrac{\pi}{2}\theta)\right\}+i\varphi\right\}\right| d\varphi
  = \int_{\varphi_1}^{\varphi_2}V(\varphi,R,x,\alpha,\theta)d\varphi,
\end{multline}
where the notation was introduced
\begin{equation}\label{eq:Vfun}
  V(\varphi,r,\xi,\alpha,\theta)=\frac{r}{\pi \xi}\exp\left\{-r\sin\varphi-(r/\xi)^\alpha\cos\left(\alpha\left(\varphi+\tfrac{\pi}{2}\theta\right)\right)\right\}.
\end{equation}

Our objective is to determine the conditions imposed on the contour $\Gamma_R$ and on the values of the parameters $\alpha,\theta$, at which the limits $\lim_{\stackrel{R\to\infty}{x\to\infty}}I(\Gamma_R)=0$ and $\lim_{\stackrel{R\to0}{x\to\infty}}I(\Gamma_R)=0$.   With the simultaneous passage of $R$ and $x$ to infinity, in view of  the arbitrariness in the choice of radius $R$ three situations can be realized: 1) $R\to\infty, x\to\infty$, in such a way that $R/x\to\infty$; 2) $R\to\infty, x\to\infty$ in such a way that $R/x\approx1$; 3) $R\to\infty, x\to\infty$ in such a way that $R/x\to0$. We will consider each of these cases separately. For this purpose we will perform in (\ref{eq:I_GammaR}) the passage to the limit $R\to\infty$, $x\to\infty$. As a result, we get
\begin{equation}\label{eq:I_gamR_lim}
  \lim_{\stackrel{R\to\infty}{x\to\infty}}I_{\Gamma_R}\leqslant\lim_{\stackrel{R\to\infty}{x\to\infty}}|I_{\Gamma_R}| \leqslant
  \lim_{\stackrel{R\to\infty}{x\to\infty}}\int_{\varphi_1}^{\varphi_2}V(\varphi,R,x,\alpha,\theta)d\varphi,
\end{equation}
Also, in each of the three cases described above, it is necessary to consider the situations separately when $0<\alpha\leqslant1$ and when $1<\alpha\leqslant2$. We will start examination with the first case.

\textit{The case $R\to\infty$, $x\to\infty$, $R/x\to\infty$}.
We will consider the case $0<\alpha\leqslant1$ first. From (\ref{eq:I_gamR_lim}) it is clear that the value of this limit is determined by the limit of the function $V(\varphi,R,x,\alpha,\theta)$ in the case $R\to\infty$ and $x\to\infty$.  Taking into account that $\alpha\leqslant1$, we obtain
\begin{multline}\label{eq:Ulim_a<1_rx2int}
  \lim_{\stackrel{R\to\infty}{x\to\infty}} V(\varphi,R,x,\alpha,\theta)=
  \lim_{\stackrel{R\to\infty}{x\to\infty}} \frac{R}{\pi x} \exp\left\{-R\left(\sin\varphi+ \frac{R^{\alpha-1}}{x^\alpha}\cos\left(\alpha\left(\varphi+\tfrac{\pi}{\theta}\right)\right)\right)\right\}\\
  = \lim_{\stackrel{R\to\infty}{x\to\infty}} \frac{R}{\pi x}e^{-R\sin\varphi}=0, \quad \mbox{if} \quad 0<\varphi<\pi.
\end{multline}
Here a suggestion was used that in the case $\alpha=1$ $R^0=1$. This limit also gives us the values of the integration limits $\varphi_1$ and $\varphi_2$ in (\ref{eq:I_gamR_lim}). From (\ref{eq:Ulim_a<1_rx2int}) it is clear that $\varphi_1\to0$, а $\varphi_2\to\pi$. This means that the limits of integration $\varphi_1$ and $\varphi_2$ approach the values $0$ and $\pi$ indefinitely but do not take these values. To determine if these integration limits can take the values $0$ and $\pi$ it is necessary to calculate the limit (\ref{eq:I_gamR_lim}) at these points.

We get down to this issue and consider the behavior of the function $V(\varphi,R,x,\alpha,\theta)$ at the points  $\varphi=0$ and $\varphi=\pi$. In the first case we have
\begin{equation*}
\lim_{\stackrel{R\to\infty}{x\to\infty}}V(0,R,x,\alpha,\theta)=
\lim_{\stackrel{R\to\infty}{x\to\infty}} \frac{R}{\pi x}\exp\left\{-(R/x)^\alpha\cos\left(\tfrac{\pi}{2}\alpha\theta\right)\right\}
\end{equation*}
Taking account that the considered case is $\alpha\leqslant1$ and taking into consideration the domain of the parameter change $\theta$, we get $-\tfrac{\pi}{2}\leqslant\tfrac{\pi}{2}\alpha\theta\leqslant\tfrac{\pi}{2}$. Thus, $\cos(\pi\alpha\theta/2)>0$ in the case $\alpha<1$ and $\theta\in[-1,1]$ or in the case $\alpha=1$ and $\theta\neq\pm1$. Using now this result in this formula, we obtain
\begin{equation}\label{eq:Ulim_ale1_rx2inf_phi=0}
  \lim_{\stackrel{R\to\infty}{x\to\infty}}V(0,R,x,\alpha,\theta)=0\quad\mbox{if}\quad \alpha<1\quad \mbox{and}\quad -1\leqslant\theta\leqslant1\quad\mbox{or}\quad \alpha=1\quad \mbox{and}\quad -1<\theta<1
\end{equation}

For the case $\varphi=\pi$ we have
\begin{equation*}
  \lim_{\stackrel{R\to\infty}{x\to\infty}}V(\pi,R,x,\alpha,\theta)=
  \lim_{\stackrel{R\to\infty}{x\to\infty}}\frac{R}{\pi x}\exp\left\{-(R/x)^\alpha\cos\left(\alpha(\pi+\tfrac{\pi}{2}\theta)\right)\right\}.
\end{equation*}
To have this limit equal to zero, it is necessary to have $\cos\left(\alpha(\pi+\tfrac{\pi}{2}\theta)\right)>0$. From here we get
$-1/\alpha-2<\theta<1/\alpha-2$. Considering that $\alpha\leqslant1$ and $-1\leqslant\theta\leqslant1$, it is clear that the obtained inequality is not satisfied for any admissible $\theta$. Consequently, the case $\varphi=\pi$ should be excluded from consideration.

We will return to the integral $I_{\Gamma_R}$. Using in the integral (\ref{eq:I_gamR_lim}) the expressions (\ref{eq:Ulim_a<1_rx2int}) and (\ref{eq:Ulim_ale1_rx2inf_phi=0}) we obtain
\begin{equation}\label{eq:I_gamR_lim_a<1}
  \lim_{\stackrel{R\to\infty}{x\to\infty}}I_{\Gamma_R}=0, \quad \mbox{if}\quad 0\leqslant\varphi_0\leqslant\pi-\epsilon,
\end{equation}
where $\epsilon$ is an arbitrary positive number such as $\epsilon\to0$. This result is valid in the case $0<\alpha<1$ and $-1\leqslant\theta\leqslant1$ or $\alpha=1$ and $-1<\theta<1$.

Now we consider the case $1<\alpha\leqslant2$. With such values of the parameter $\alpha$ the asymmetry parameter changes within the limits $-(\tfrac{2}{\alpha}-1)\leqslant\theta\leqslant(\tfrac{2}{\alpha}-1)$.  We have
\begin{equation*}
   \lim_{\stackrel{R\to\infty}{x\to\infty}} V(\varphi,R,x,\alpha,\theta)=
    \lim_{\stackrel{R\to\infty}{x\to\infty}} \frac{R}{\pi x}\exp\left\{-R\sin\varphi-(R/x)^\alpha\cos\left(\alpha\left(\varphi+\tfrac{\pi}{2}\theta\right)\right)\right\}
\end{equation*}
Since $\alpha>1$ we obtain that $(R/x)^\alpha$ increases quicker than $R$ at $R\to\infty$. Consequently, the summand $R\sin\varphi$ can be neglected. As a result
\begin{equation*}
\lim_{\stackrel{R\to\infty}{x\to\infty}}V(\varphi,R,x,\alpha,\theta)=\lim_{\stackrel{R\to\infty}{x\to\infty}}\frac{R}{\pi x}\exp\left\{-(R/x)^\alpha\cos\left(\alpha\left(\varphi+\tfrac{\pi}{2}\theta\right)\right)\right\}.
\end{equation*}
This limit will be equal to zero if $\cos\left(\alpha\left(\varphi+\tfrac{\pi}{2}\theta\right)\right)>0$ or $-\frac{\pi}{2}<\alpha\left(\varphi+\tfrac{\pi}{2}\theta\right)<\frac{\pi}{2}$. From here we obtain that
\begin{equation}\label{eq:Ulim_a>1_rx2inf_phi}
   \lim_{\stackrel{R\to\infty}{x\to\infty}}V(\varphi,R,x,\alpha,\theta)=0,\quad\mbox{if}\quad -\frac{\pi}{2\alpha}-\frac{\pi}{2}\theta<\varphi<\frac{\pi}{2\alpha}-\frac{\pi}{2}\theta.
\end{equation}

We consider the extreme points of this interval and exactly $\varphi=-\frac{\pi}{2\alpha}-\frac{\pi}{2}\theta$ and $\varphi=\frac{\pi}{2\alpha}-\frac{\pi}{2}\theta$. It is easy to see that at these points $\cos\left(\alpha\left(\varphi+\tfrac{\pi}{2}\theta\right)\right)=0$. In the case $\varphi=\frac{\pi}{2\alpha}-\frac{\pi}{2}\theta$ we have
\begin{equation}\label{eq:Ulim_a>1_rx2inf_phimax_tmp}
  \lim_{\stackrel{R\to\infty}{x\to\infty}} V\left(\frac{\pi}{2\alpha}-\frac{\pi}{2}\theta,R,x,\alpha,\theta\right)=
  \lim_{\stackrel{R\to\infty}{x\to\infty}}\frac{R}{\pi x}\exp\left\{-R\sin\left(\frac{\pi}{2\alpha}-\frac{\pi}{2}\theta\right)\right\}
\end{equation}
Now we will examine the domain of the admissible values of the sine argument. Substituting alternately in the expression $\frac{\pi}{2\alpha}-\frac{\pi}{2}\theta$ the values $\alpha=1$ and $\alpha=2$ we get
\begin{equation*}
\frac{\pi}{2\alpha}-\frac{\pi}{2}\theta= \frac{\pi}{2}-\frac{\pi}{2}\theta,\quad \mbox{if }\alpha=1,\quad
\qquad \frac{\pi}{2\alpha}-\frac{\pi}{2}\theta=-\frac{\pi}{4}+\frac{\pi}{2}\theta,\quad\mbox{if }\alpha=2.
\end{equation*}

Considering that $-(\frac{2}{\alpha}-1)\leqslant\theta\leqslant\frac{2}{\alpha}-1$ we obtain that in the case $\alpha=1$ the parameter $\theta$ takes the values from the range $-1\leqslant\theta\leqslant1$, and in the case $\alpha=2$ the asymmetry parameter takes only one possible value $\theta=0$. Thus,
\begin{equation*}
  \frac{\pi}{2}-\frac{\pi}{2}\theta=\begin{cases}
                                       \pi, & \mbox{if } \alpha=1,\ \theta=-1, \\
                                       0, & \mbox{if } \alpha=1,\ \theta=1,
                                     \end{cases} \qquad
  -\frac{\pi}{4}+\frac{\pi}{2}\theta=\frac{\pi}{4},\ \mbox{if } \alpha=2.
\end{equation*}
Consequently, for all other values of the parameters $\alpha,\theta$, lying in the ranges  $1<\alpha\leqslant2$ and   $-(\frac{2}{\alpha}-1)\leqslant\theta\leqslant\frac{2}{\alpha}-1$ the sine argument in the expression (\ref{eq:Ulim_a>1_rx2inf_phimax_tmp}) takes the values from the range  $0<\frac{\pi}{2\alpha}-\frac{\pi}{2}\theta<\pi$. From here it follows that
\begin{equation*}
  \sin\left(\frac{\pi}{2\alpha}-\frac{\pi}{2}\theta\right)>0, \quad\mbox{if}\quad 1<\alpha\leqslant2\quad \mbox{and}\quad   -\left(\frac{2}{\alpha}-1\right)\leqslant\theta\leqslant\frac{2}{\alpha}-1.
\end{equation*}

Now using this result in (\ref{eq:Ulim_a>1_rx2inf_phimax_tmp}), we obtain
\begin{equation}\label{eq:Ulim_a>1_rx2inf_phimax}
  \lim_{\stackrel{R\to\infty}{x\to\infty}} V\left(\frac{\pi}{2\alpha}-\frac{\pi}{2}\theta,R,x,\alpha,\theta\right)=0, \quad 1<\alpha\leqslant2\quad \text{and}\quad   -\left(\frac{2}{\alpha}-1\right)\leqslant\theta\leqslant\frac{2}{\alpha}-1.
\end{equation}
It should be pointed out that the case $\alpha=1$ is excluded from the result since it was examined in the previous case.

Now we consider the case when $\varphi=-\frac{\pi}{2\alpha}-\frac{\pi}{2}\theta$. We obtain
\begin{equation}\label{eq:Ulim_a>1_rx2inf_phimin_tmp}
  \lim_{\stackrel{R\to\infty}{x\to\infty}} V\left(-\frac{\pi}{2\alpha}-\frac{\pi}{2}\theta,R,x,\alpha,\theta\right)=
  \lim_{\stackrel{R\to\infty}{x\to\infty}}\frac{R}{\pi x}\exp\left\{R\sin\left(\frac{\pi}{2\alpha}+\frac{\pi}{2}\theta\right)\right\}
\end{equation}
Considering that $1<\alpha\leqslant2$ and $-(2/\alpha-1)\leqslant\theta\leqslant 2/\alpha-1$, we obtain that the sine argument in this expression takes the values from the range  $0<\frac{\pi}{2\alpha}+\frac{\pi}{2}\theta<\pi$. Thus,
\begin{equation*}
  \sin\left(\frac{\pi}{2\alpha}+\frac{\pi}{2}\theta\right)>0\quad\mbox{if}\quad 1<\alpha\leqslant2\quad \mbox{and}\quad   -\left(\frac{2}{\alpha}-1\right)\leqslant\theta\leqslant\frac{2}{\alpha}-1.
\end{equation*}
Now using this result in (\ref{eq:Ulim_a>1_rx2inf_phimin_tmp}), we obtain
\begin{equation}\label{eq:Ulim_a>1_rx2inf_phimin}
  \lim_{\stackrel{R\to\infty}{x\to\infty}} V\left(-\frac{\pi}{2\alpha}-\frac{\pi}{2}\theta,R,x,\alpha,\theta\right)=\infty,
  \quad\mbox{if}\quad 1<\alpha\leqslant2\quad \mbox{and}\quad   -\left(\frac{2}{\alpha}-1\right)\leqslant\theta\leqslant\frac{2}{\alpha}-1.
\end{equation}
Consequently, this case is necessary to exclude.

Combining now (\ref{eq:Ulim_a>1_rx2inf_phi}) and (\ref{eq:Ulim_a>1_rx2inf_phimax}) and taking into account (\ref{eq:Ulim_a>1_rx2inf_phimin}) we obtain that
\begin{equation*}
   \lim_{\stackrel{R\to\infty}{x\to\infty}} V\left(\varphi,R,x,\alpha,\theta\right)=0, \quad\mbox{if}\quad -\frac{\pi}{2\alpha}-\frac{\pi}{2}\theta<\varphi\leqslant\frac{\pi}{2\alpha}-\frac{\pi}{2}\theta.
\end{equation*}
Now we return to the integral $I_{\Gamma_R}$. Using in (\ref{eq:I_gamR_lim}) the previous expression we obtain
\begin{equation}\label{eq:I_gamR_lim_a>1}
  \lim_{\stackrel{R\to\infty}{x\to\infty}} I_{\Gamma_R}=0, \quad\mbox{if}\quad -\frac{\pi}{2\alpha}-\frac{\pi}{2}\theta+\epsilon\leqslant\varphi_0\leqslant\frac{\pi}{2\alpha}-\frac{\pi}{2}\theta,\quad R/x\to\infty,\quad 1<\alpha\leqslant2,
\end{equation}
where $\epsilon$ is the arbitrary positive real number in such a way that $\epsilon\to0$.

Combining now the expressions (\ref{eq:I_gamR_lim_a<1}) and (\ref{eq:I_gamR_lim_a>1}) we get that in the case when $R\to\infty$ and $x\to\infty$ and the relation $R/x\to\infty$ for the limit we get the following result
\begin{equation*}
  \lim_{\stackrel{R\to\infty}{x\to\infty}}I_{\Gamma_R}=0,\quad\mbox{if}\quad
  \begin{cases}
    0<\alpha<1, -1\leqslant\theta\leqslant1, & 0\leqslant\varphi_0\leqslant\pi-\epsilon \\
    \alpha=1, -1<\theta<1, & 0\leqslant\varphi_0\leqslant\pi-\epsilon,\\
    1<\alpha\leqslant2, |\theta|\leqslant2/\alpha-1,& -\frac{\pi}{2\alpha}-\frac{\pi}{2}\theta+\epsilon\leqslant\varphi_0\leqslant\frac{\pi}{2\alpha}-\frac{\pi}{2}\theta.
  \end{cases}
\end{equation*}
Thus, the first, the second and third items of the lemma have been proved.

\textit{The case $R\to\infty$, $x\to\infty$, $R/x\approx1$.} We consider the passage to the limit (\ref{eq:I_gamR_lim}) in the case when $R$ and $x$ go to infinity but their relation $R/x\approx1$. We still are interested in the conditions under which the integral in (\ref{eq:I_gamR_lim}) will be equal to zero. The behavior of this integral is determined by the behavior of the integrand $V(\varphi,R,x,\alpha,\theta)$. Since in the considered case the relation $R/x\approx1$, then from (\ref{eq:Vfun}) it is clear that in the exponent the summand with the multiplier $R/x$ can be neglected in comparison with the summand containing the multiplier $R$.  We obtain
\begin{multline}\label{eq:Ulim_rx_apr_1}
  \lim_{\stackrel{R\to\infty}{x\to\infty}} V(\varphi,R,x,\alpha,\theta)=
  \lim_{\stackrel{R\to\infty}{x\to\infty}} \frac{R}{\pi x}\exp\left\{-R\sin\varphi- (R/x)^\alpha\cos\left(\alpha\left(\varphi+\tfrac{\pi}{2}\theta\right)\right)\right\}\\
  =\lim_{\stackrel{R\to\infty}{x\to\infty}} \frac{R}{\pi x}\exp\left\{-R\sin\varphi\right\}=0, \quad 0<\varphi<\pi,\quad 0<\alpha\leqslant2.
\end{multline}
It is easy to see that in the case $\varphi=0$ and $\varphi=\pi$ the limit $ \lim_{\stackrel{R\to\infty}{x\to\infty}} V(\varphi,R,x,\alpha,\theta)\neq0$.
Therefore, these two items should be excluded from consideration. Now using (\ref{eq:Ulim_rx_apr_1}) в (\ref{eq:I_gamR_lim}) it is clear that in the considered case when $R\to\infty$ and $x\to\infty$ but $R/x\approx1$
\begin{equation*}
  \lim_{\stackrel{R\to\infty}{x\to\infty}} I_{\Gamma_R}=0, \quad 0+\epsilon\leqslant\varphi\leqslant\pi-\epsilon,\quad 0<\alpha\leqslant2,
\end{equation*}
where $\epsilon$ is an arbitrary positive real number in a way that $\epsilon\to0$. This expression proves item 4 of the lemma.

\textit{The case $R\to\infty$, $x\to\infty$, $R/x\to0$.} Now we consider the case when in the integral (\ref{eq:I_gamR_lim}) $R$ and $x$ go to infinity but their relation $R/x$ go to zero. As in the previous cases, the behavior of the integral in (\ref{eq:I_gamR_lim}) will be determined by the behavior of the integrand $V(\varphi,R,x,\alpha,\theta)$. Taking into account that $R/x\to0$, then in the exponent in (\ref{eq:Vfun}) one can neglect the summand with the multiplier ($R/x$) in comparison to the summand with the multiplier $R$. As a result, we get
\begin{equation}\label{eq:Ulim_rx20}
  \lim_{\stackrel{R\to\infty}{x\to\infty}} V(\varphi,R,x,\alpha,\theta)=
  \lim_{\stackrel{R\to\infty}{x\to\infty}} \frac{R}{\pi x}\exp\left\{-R\sin\varphi\right\}=0,\quad 0<\varphi<\pi,\quad 0<\alpha\leqslant2.
\end{equation}

We will consider the extreme points of the obtained range of the variable $\varphi$, i.e. $\varphi=0$ and $\varphi=\pi$. Taking into account that $R/x\to0$,  in the case $\varphi=0$ we obtain
\begin{equation}\label{eq:Ulim_rx20_phi=0}
  \lim_{\stackrel{R\to\infty}{x\to\infty}} V(0,R,x,\alpha,\theta) =\lim_{\stackrel{R\to\infty}{x\to\infty}} \frac{R}{\pi x} \exp\left\{-(R/x)^\alpha\cos\left(\tfrac{\pi}{2}\alpha\theta\right)\right\}=0, \quad 0<\alpha\leqslant2.
\end{equation}
In the same way, in the case $\varphi=\pi$ we get
\begin{equation}\label{eq:Ulim_rx20_phi=pi}
\lim_{\stackrel{R\to\infty}{x\to\infty}} V(\pi,R,x,\alpha,\theta)=0,\quad 0<\alpha\leqslant2.
\end{equation}

Now using the expressions (\ref{eq:Ulim_rx20}), (\ref{eq:Ulim_rx20_phi=0}), (\ref{eq:Ulim_rx20_phi=pi}) в (\ref{eq:I_gamR_lim}), we obtain
\begin{equation*}
  \lim_{\stackrel{R\to\infty}{x\to\infty}} I_{\Gamma_R} =0,\quad 0\leqslant\varphi\leqslant\pi,\quad 0<\alpha\leqslant2,\quad R/x\to0.
\end{equation*}
As a result, the fifth item of the lemma has been proved.

\textit{The case $R\to0$, $x\to\infty$.} It is left to consider the case $R\to0$ and $x\to\infty$. Preforming the limit passage in (\ref{eq:I_GammaR}) we get
\begin{equation}\label{eq:I_gamR_lim_Rto0}
  \lim_{\stackrel{R\to0}{x\to\infty}} I_{\Gamma_R}\leqslant  \lim_{\stackrel{R\to0}{x\to\infty}}\int_{\varphi_1}^{\varphi_2} V(\varphi,R,x,\alpha,\theta)d\varphi.
\end{equation}
As in other cases this limit is determined with the limit $\lim_{\stackrel{R\to0}{x\to\infty}} V(\varphi,R,x,\alpha,\theta)$. Now using the expression (\ref{eq:Vfun}) here it is easy to see that
\begin{equation*}
   \lim_{\stackrel{R\to0}{x\to\infty}} V(\varphi,R,x,\alpha,\theta)=0,\quad 0<\alpha\leqslant2,\quad -\pi\leqslant\varphi\leqslant\pi.
\end{equation*}
Now returning to (\ref{eq:I_gamR_lim_Rto0}) we obtain that $\varphi_1=-\pi$, $\varphi_2=\pi$ and
\begin{equation*}
   \lim_{\stackrel{R\to0}{x\to\infty}} I_{\Gamma_R}=0,\quad -\pi\leqslant\varphi\leqslant\pi,\quad 0<\alpha\leqslant2.
\end{equation*}
The obtained expression proves the lemma completely.
\begin{flushright}
  $\Box$
\end{flushright}
\end{proof}
\begin{lemma}\label{lem:LoopInt}
We will consider the contour $\Gamma_2$ in the complex plane $\zeta=\tau e^{i\varphi}$ which is the ray that comes out of the point $\zeta=0$ and extends to infinity. Then, if moving along the contour $\Gamma_2$ from the point $\zeta=0$ we go to infinity in a way that at $x\to\infty$ one condition is met
\begin{enumerate}
  \item $0\leqslant\varphi\leqslant\pi-\epsilon,\quad \frac{\tau}{x}\to\infty,\quad 0<\alpha<1,\quad -1\leqslant\theta\leqslant1$;
  \item $0\leqslant\varphi\leqslant\pi-\epsilon,\quad \frac{\tau}{x}\to\infty,\quad \alpha=1,\quad -1<\theta<1$;
  \item $-\frac{\pi}{2\alpha}-\frac{\pi}{2}\theta+\epsilon\leqslant\varphi\leqslant\frac{\pi}{2\alpha}-\frac{\pi}{2}\theta,\quad \frac{\tau}{x}\to\infty,\quad 1<\alpha\leqslant2$;
  \item $0+\epsilon\leqslant\varphi\leqslant\pi-\epsilon$,\quad $\frac{\tau}{x}\approx1$, \quad $0<\alpha\leqslant2$;
  \item $0\leqslant\varphi\leqslant\pi,\quad \frac{\tau}{x}\to0,\quad 0<\alpha\leqslant2$, \label{cond:tu/x2int_lemm}
\end{enumerate}
then the equality turns out to be valid
\begin{multline}\label{eq:loopInt_eq}
  \frac{1}{\pi x}\int_{0}^{\infty}\exp\left\{ i\tau- \left(\frac{\tau}{x}\right)^\alpha \exp\left\{i\tfrac{\pi}{2}\alpha\theta\right\}\right\}d\tau\\
  =\frac{1}{\pi x}\int_{0}^{\infty}\exp\left\{i\tau e^{i\varphi}-\left(\frac{\tau}{x}\right)^\alpha \exp\left\{i\alpha(\varphi+\tfrac{\pi}{2}\theta)\right\}+i\varphi\right\}d\tau.
\end{multline}
Here $\epsilon$ is the arbitrary positive number in such a way that $\epsilon\to0$.
\end{lemma}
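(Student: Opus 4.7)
The plan is a standard contour-deformation argument via Cauchy's integral theorem. For a fixed $R>0$, form the closed contour $\mathcal{C}_R$ consisting of: the segment $[0,R]$ on the real axis; the circular arc $\Gamma_R=\{Re^{i\psi}:\psi\text{ between }0\text{ and }\varphi\}$; and the segment of the ray $\{\tau e^{i\varphi}:0\leqslant\tau\leqslant R\}$, traversed from $Re^{i\varphi}$ back to the origin. The integrand $F(\zeta)=e^{i\zeta}g^{+}(\zeta/x,\alpha,-\theta)$ is holomorphic in the cut plane $\{-\pi<\arg\zeta<\pi\}$, since $g^{+}$ was constructed with a cut along $\arg\zeta=\pi$ and $e^{i\zeta}$ is entire. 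In each of the five cases, the closed sector bounded by $\mathcal{C}_R$ lies in this domain of holomorphy: the restriction $\varphi\leqslant\pi-\epsilon$ in cases 1, 2, 4 keeps the ray strictly off the cut; the bound $|\theta|\leqslant2/\alpha-1$ in case 3 guarantees that the lower end $-\tfrac{\pi}{2\alpha}-\tfrac{\pi}{2}\theta+\epsilon$ of the $\varphi$-range is strictly greater than $-\pi$; and in case 5 the integrand extends continuously to the boundary $\varphi=\pi$. Hence, by Cauchy's theorem, $\oint_{\mathcal{C}_R}F(\zeta)\,d\zeta=0$.

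Splitting the contour integral into its three pieces, dividing by $\pi x$, and parametrising the ray by $\zeta=\tau e^{i\varphi}$ (so that $d\zeta=e^{i\varphi}\,d\tau$) leads to the identity
\begin{equation*}
\frac{1}{\pi x}\int_0^{R}e^{i\tau}\exp\!\Bigl\{-(\tau/x)^\alpha e^{i\pi\alpha\theta/2}\Bigr\}d\tau+I(\Gamma_R)-\frac{1}{\pi x}\int_0^{R}\exp\!\Bigl\{i\tau e^{i\varphi}-(\tau/x)^\alpha e^{i\alpha(\varphi+\pi\theta/2)}+i\varphi\Bigr\}d\tau=0.
\end{equation*}
Now I would let $R\to\infty$ simultaneously with $x\to\infty$ at the rate prescribed by the current case, interpreting the stated condition on $\tau/x$ as a condition on the auxiliary radius $R/x$ used to close the contour. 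The conditions on $(\varphi,\alpha,\theta,R/x)$ in items 1--5 of the present lemma are in one-to-one correspondence with the hypotheses of items 1--5 of Lemma~\ref{lem:I_GammaR}, so $I(\Gamma_R)\to 0$. Passing to the limit in the displayed identity yields (\ref{eq:loopInt_eq}).

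The only point that requires genuine verification is that the closed sector avoids the cut $\arg\zeta=\pi$ in every case; this is automatic in cases 1, 2, 4 (via the $\pi-\epsilon$ truncation), in case 5 (via continuous extension of $F$ to the boundary), and follows from the bound $|\theta|\leqslant2/\alpha-1$ in case 3. Once this is checked, the lemma is an immediate consequence of Lemma~\ref{lem:I_GammaR}, with no further estimates needed. The main obstacle is essentially bookkeeping: matching the cases of the present statement to the appropriate items of the preceding lemma and confirming that each arc hypothesis is indeed satisfied.
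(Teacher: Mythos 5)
Your argument is correct and takes essentially the same route as the paper: close the contour with the real segment, the arc $\Gamma_R$, and the ray, apply Cauchy's theorem, and invoke items 1--5 of Lemma~\ref{lem:I_GammaR} to make the arc contribution vanish in the prescribed joint limit $R\to\infty$, $x\to\infty$. The only difference is that the paper additionally indents the contour around the branch point at $\zeta=0$ with a small arc $\Gamma_\varepsilon$ (whose vanishing is item 6 of Lemma~\ref{lem:I_GammaR}), whereas you place the sector's vertex at the origin and rely on continuity of $\zeta^\alpha$ there --- a harmless simplification.
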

\begin{proof}
We will consider the integral (\ref{eq:I_Gamma}), we will choose in the complex plane $\zeta=\tau e^{i\varphi}$ the auxiliary contour $\Gamma$, which is defined in the following way (see Fig.~\ref{fig:loopC})
\begin{equation*}
  \Gamma=\left\{\begin{array}{c}
             \Gamma_1  =\{\zeta: \varepsilon\leqslant\tau\leqslant R, \varphi=0\}, \\
             \Gamma_R = \{\zeta: \tau=R, 0\leqslant\varphi\leqslant\varphi_0\},\\
             \Gamma_2 = \{\zeta: R\geqslant\tau\geqslant \varepsilon, \varphi=\varphi_0\}, \\
             \Gamma_\varepsilon= \{\zeta: \tau=\varepsilon, \varphi_0\geqslant\varphi\geqslant0\},
           \end{array}\right.
\end{equation*}
where $\varepsilon>0$ and  $R>0$ are arbitrary real numbers in a way that $\varepsilon<R$.  The contour is traversed in the positive direction.
\begin{figure}
  \centering
  \includegraphics[width=0.3\textwidth]{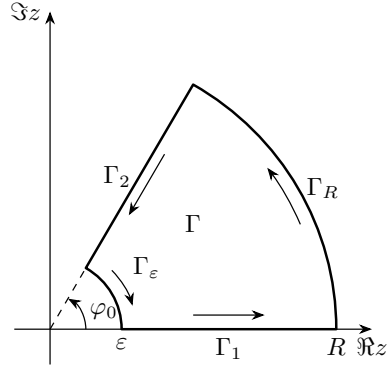}
  \caption{The contour of integration $\Gamma$}\label{fig:loopC}
\end{figure}
As a result, the integral $I(\Gamma)$ can be written in the form
\begin{multline*}
  I(\Gamma)=  \frac{1}{x\pi}\int_{\Gamma_1} e^{i\zeta}g^{+}(\zeta/x,\alpha,-\theta)d\zeta
  +\frac{1}{x\pi}\int_{\Gamma_R} e^{i\zeta}g^{+}(\zeta/x,\alpha,-\theta)d\zeta\\
  +\frac{1}{x\pi}\int_{\Gamma_2} e^{i\zeta}g^{+}(\zeta/x,\alpha,-\theta)d\zeta
  +\frac{1}{x\pi}\int_{\Gamma_\varepsilon} e^{i\zeta}g^{+}(\zeta/x,\alpha,-\theta)d\zeta=I(\Gamma_1)+I(\Gamma_R)+I(\Gamma_2)+I(\Gamma_\varepsilon).
\end{multline*}

Now we will let  $R$ go to infinity in this expression and $\varepsilon$ to zero. As a result, we get
\begin{equation}\label{eq:I_gam_lim}
  \lim_{\stackrel{R\to\infty,\varepsilon\to0}{x\to\infty}}I(\Gamma)
  =\lim_{\stackrel{R\to\infty,\varepsilon\to0}{x\to\infty}}I(\Gamma_1)
  +\lim_{\stackrel{R\to\infty}{x\to\infty}}I(\Gamma_R)
  +\lim_{\stackrel{R\to\infty,\varepsilon\to0}{x\to\infty}}I(\Gamma_2)
  +\lim_{\stackrel{\varepsilon\to0}{x\to\infty}}I(\Gamma_\varepsilon).
\end{equation}
We will suppose that for this integral condition 6 of lemma~\ref{lem:I_GammaR} is met and one of  conditions 1-5 of the same lemma. Then, in the framework of the assumptions made, lemma~\ref{lem:I_GammaR} gives that
\begin{equation}\label{eq:I_gam_eps_lim}
  \lim_{\stackrel{\varepsilon\to0}{x\to\infty}}I(\Gamma_\varepsilon)=0,
\end{equation}
since condition 6 of lemma ~\ref{lem:I_GammaR} is met and
\begin{equation}\label{eq:I_gam_R_lim}
  \lim_{\stackrel{R\to\infty}{x\to\infty}}I(\Gamma_R)=0,
\end{equation}
since one of conditions 1-5 of lemma~\ref{lem:I_GammaR} is met. Using (\ref{eq:I_Gamma}) and considering that $\zeta=\tau e^{i\varphi}$, for the integral $I(\Gamma_1)$ we obtain
\begin{multline}\label{eq:I_gam_1_lim}
  \lim_{\stackrel{R\to\infty,\varepsilon\to0}{x\to\infty}}I(\Gamma_1) = \lim_{\stackrel{R\to\infty,\varepsilon\to0}{x\to\infty}} \frac{1}{\pi x}\int_{\varepsilon}^{R}\exp\left\{i\tau -(\tau/x)^\alpha\exp\left\{i\tfrac{\pi}{2}\alpha\theta\right\}\right\}d\tau\\
  = \frac{1}{\pi x}\int_{0}^{\infty}\exp\left\{i\tau -(\tau/x)^\alpha\exp\left\{i\tfrac{\pi}{2}\alpha\theta\right\}\right\}d\tau.
\end{multline}
For the integral $I(\Gamma_2)$ we get
\begin{multline}\label{eq:I_gam_2_lim}
  \lim_{\stackrel{R\to\infty,\varepsilon\to0}{x\to\infty}}I(\Gamma_2) = \lim_{\stackrel{R\to\infty,\varepsilon\to0}{x\to\infty}}
  \frac{1}{\pi x}\int_{R}^{\varepsilon}\exp\left\{i\tau e^{i\varphi}-(\tau/x)^\alpha \exp\left\{i\alpha(\varphi+\tfrac{\pi}{2}\theta)\right\}+i\varphi\right\}d\tau\\
  =-\frac{1}{\pi x}\int_{0}^{\infty}\exp\left\{i\tau e^{i\varphi}-(\tau/x)^\alpha \exp\left\{i\alpha(\varphi+\tfrac{\pi}{2}\theta)\right\}+i\varphi\right\}d\tau.
\end{multline}

Next, we will take account of the fact that the integrand of the integral (\ref{eq:I_Gamma}) is an analytic function in the complex plane $\zeta$, and the contour $\Gamma$ is a closed contour. Hence,
\begin{equation*}
  \lim_{\stackrel{R\to\infty,\varepsilon\to0}{x\to\infty}}I(\Gamma)=0.
\end{equation*}
Using now this expression as well as the expressions (\ref{eq:I_gam_eps_lim}), (\ref{eq:I_gam_R_lim}), (\ref{eq:I_gam_1_lim}) and (\ref{eq:I_gam_2_lim}) in (\ref{eq:I_gam_lim}) we get
\begin{equation*}
  \frac{1}{\pi x}\int_{0}^{\infty}\exp\left\{ i\tau- \left(\frac{\tau}{x}\right)^\alpha \exp\left\{i\tfrac{\pi}{2}\alpha\theta\right\}\right\}d\tau=
  \frac{1}{\pi x}\int_{0}^{\infty}\exp\left\{i\tau e^{i\varphi}-\left(\frac{\tau}{x}\right)^\alpha \exp\left\{i\alpha(\varphi+\tfrac{\pi}{2}\theta)\right\}+i\varphi\right\}d\tau.
\end{equation*}
At the same time, conditions 1-5 of lemma~\ref{lem:I_GammaR} used for the proof (\ref{eq:I_gam_R_lim}) pass to conditions 1-5 of this lemma. Thus, the lemma is proved.
\begin{flushright}
  $\Box$
\end{flushright}
\end{proof}

As we can see, the proved lemma gives an opportunity to substitute the contour of integration in the integral $\frac{1}{\pi x}\int_{0}^{\infty}e^{i\tau} g(\tau/x,\alpha,-\theta)d\tau$ and  pass from integration along the positive part of the real axis to integration along the half line leaving the origin at the angle $\varphi$. This lemma turns out to be very useful in the next theorem, the proof of which we will now proceed to.

\begin{theorem}\label{theor:pdfExpansion}
In the case $x\to\pm\infty$ for any admissible set of parameters $(\alpha,\theta)$, with exception of the values $\theta=\pm1$, for the probability density $g(x,\alpha,\theta)$ the representation in the form of a power series is valid
\begin{equation}\label{eq:g_expan}
  g(x,\alpha,\theta)=g_N^{\infty}(|x|,\alpha,\theta^*)+R_N^{\infty}(|x|,\alpha,\theta^*),
\end{equation}
where $\theta^*=\theta\sign(x)$ and
\begin{equation}\label{eq:g_NInf}
  g_N^\infty(x,\alpha,\theta)=\frac{1}{\pi}\sum_{n=0}^{N-1}\frac{(-1)^{n+1}}{n!}\Gamma(\alpha n+1)\sin\left(\tfrac{\pi}{2}\alpha n(1+\theta)\right) x^{-\alpha n-1},\quad x>0,
\end{equation}
\begin{equation}\label{eq:R_NInf}
|R_N^{\infty}(x,\alpha,\theta)|\leqslant
  \frac{x^{-\alpha N-1}}{\pi N!}\left(\Gamma(\alpha N+1)+x^{-\alpha}\Gamma(\alpha(N+1)+1)\right),\quad x>0.
\end{equation}
\end{theorem}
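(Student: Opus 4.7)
The plan is to begin from the Fourier inversion formula of lemma~\ref{lem:Inverse}, reduce to $x>0$ using the inversion property~\eqref{eq:InversionFormula} (the substitution $\theta\mapsto-\theta$ for $x<0$ precisely produces $\theta^{*}=\theta\sign x$), and rescale $\tau=tx$ to rewrite
\begin{equation*}
  g(x,\alpha,\theta) = \frac{1}{\pi x}\Re\int_{0}^{\infty} e^{i\tau}\, g^{+}(\tau/x,\alpha,-\theta)\, d\tau.
\end{equation*}
The main idea is to expand $g^{+}$ in a finite Taylor polynomial in the variable $(\tau/x)^{\alpha}$ and integrate term by term. Because the resulting power integrals $\int_{0}^{\infty}e^{i\tau}\tau^{\alpha n}\,d\tau$ are not absolutely convergent on the positive real axis, the expansion only becomes meaningful after the contour has been rotated; this is where lemma~\ref{lem:LoopInt} enters.

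Next I would apply lemma~\ref{lem:LoopInt} to deform the integration to a ray $\arg\zeta=\varphi$ with $\sin\varphi>0$, so that $e^{i\tau e^{i\varphi}}$ supplies a damping factor $e^{-\tau\sin\varphi}$. A convenient choice is $\varphi=\pi/(2\alpha)-\pi\theta/2$, which places the contour on the Stokes line $\cos(\alpha(\varphi+\pi\theta/2))=0$, where $|g^{+}(\tau e^{i\varphi}/x,\alpha,-\theta)|=1$; this is exactly the upper boundary permitted by condition~3 of lemma~\ref{lem:LoopInt} for $1<\alpha\leq 2$, and it lies in the broader range $[0,\pi-\epsilon]$ admitted for $0<\alpha\leq 1$ whenever $\theta\geq 1/\alpha-2$, with a nearby admissible angle used otherwise (the factor $e^{-\tau\sin\varphi}$ alone dominates the sub-exponential behavior of $g^{+}$ when $\alpha<1$). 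On the rotated ray I would expand
\begin{equation*}
  g^{+}(\tau e^{i\varphi}/x,\alpha,-\theta) = \sum_{n=0}^{N-1}\frac{(-1)^n}{n!}(\tau/x)^{\alpha n}e^{i\alpha n(\varphi+\pi\theta/2)} + \rho_N(\tau,x,\varphi)
\end{equation*}
and evaluate the $n$th surviving integral using $\int_0^\infty e^{-\tau(\sin\varphi-i\cos\varphi)}\tau^{\alpha n}\,d\tau=\Gamma(\alpha n+1)(\sin\varphi-i\cos\varphi)^{-(\alpha n+1)}$. With $\sin\varphi-i\cos\varphi=e^{i(\varphi-\pi/2)}$, the $\varphi$-dependence cancels out of the overall phase, and the real part reproduces exactly the $n$th term of~\eqref{eq:g_NInf}.

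For the remainder I would use the integral form $\rho_N(w)=\frac{(-w)^N}{(N-1)!}\int_{0}^{1}(1-t)^{N-1}e^{-tw}\,dt$ with $w=(\tau/x)^{\alpha}e^{i\alpha(\varphi+\pi\theta/2)}$. Since $\Re w\geq 0$ on the chosen ray, $|e^{-tw}|\leq 1$ yields $|\rho_N|\leq (\tau/x)^{\alpha N}/N!$; writing $\rho_N=(-w)^N/N!+\rho_{N+1}$, bounding the leading term pointwise and $\rho_{N+1}$ by the same Taylor-remainder estimate, and then integrating against $e^{-\tau\sin\varphi}$ produces the two Gamma-function contributions in~\eqref{eq:R_NInf}. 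The principal obstacle is the bookkeeping around the choice of $\varphi$: one must verify that a single prescription simultaneously satisfies the hypotheses of lemma~\ref{lem:LoopInt} and keeps $\Re w\geq 0$ uniformly on the admissible region $|\theta|<\min(1,2/\alpha-1)$. The excluded endpoints $\theta=\pm1$ are precisely those at which the contour constraint and the $\Re w\geq 0$ requirement collide, so the exclusion in the theorem statement is structural rather than technical.
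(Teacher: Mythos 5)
Your outline is essentially the paper's: Fourier inversion, reduction to $x>0$ via the inversion property, the substitution $\tau=tx$, contour rotation justified by lemma~\ref{lem:LoopInt}, term-by-term evaluation through the Gamma-function integral (\ref{eq:Gamma_intRepr1}), and a Taylor-remainder estimate. The one genuine divergence is the choice of ray: the paper fixes $\varphi=\tfrac{\pi}{2}-\tfrac{\pi}{2}\theta$ (so the damping factor becomes $e^{-\tau e^{-i\pi\theta/2}}$ and the expanded factor becomes $e^{-(i\tau/x)^\alpha}$), whereas you rotate to the Stokes ray $\varphi=\tfrac{\pi}{2\alpha}-\tfrac{\pi}{2}\theta$. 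Your observation that the $\varphi$-dependence cancels in each term is correct (the phase $e^{i\varphi}e^{i\alpha n\varphi}e^{-i(\alpha n+1)\varphi}=1$), so both choices reproduce (\ref{eq:g_NInf}); and your integral form of the Taylor remainder with $\Re w\geqslant 0$ on the Stokes ray is actually sounder than the paper's device (a Lagrange-form remainder $\tfrac{y^N}{N!}e^{y\zeta}$ with real $\zeta\in(0,1)$, which is not valid for complex-valued functions, followed by a first-order truncation of $e^{-\zeta w}$).

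The gap is in the last step: your scheme does not produce the bound (\ref{eq:R_NInf}) as stated. Once you pass to absolute values inside the integral, you pay a factor $(\sin\varphi)^{-\gamma}$ for each power $\tau^{\gamma-1}$: bounding $|\rho_{N+1}(w)|\leqslant |w|^{N+1}/(N+1)!$ and integrating against $e^{-\tau\sin\varphi}$ gives
\begin{equation*}
  \frac{x^{-\alpha(N+1)-1}}{\pi (N+1)!}\,\Gamma(\alpha(N+1)+1)\,(\sin\varphi)^{-\alpha(N+1)-1},
\end{equation*}
and likewise for the leading monomial if it is "bounded pointwise" rather than integrated exactly via (\ref{eq:Gamma_intRepr1}). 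On your ray $\sin\varphi=\sin\left(\tfrac{\pi}{2\alpha}-\tfrac{\pi}{2}\theta\right)<1$ in general (e.g.\ $1/\sqrt{2}$ for $\alpha=2$, $\theta=0$), so $(\sin\varphi)^{-\alpha(N+1)-1}$ grows geometrically in $N$ and is not absorbed by the extra $1/(N+1)$; the resulting estimate is strictly weaker than (\ref{eq:R_NInf}) and, since the paper uses (\ref{eq:R_NInf}) quantitatively to define the threshold coordinate $x_\varepsilon^N$, this is not a cosmetic loss. There is no single prescription of $\varphi$ that fixes this: $\sin\varphi=1$ forces $\varphi=\pi/2$, where $\Re w=(\tau/x)^\alpha\cos\left(\tfrac{\pi}{2}\alpha(1+\theta)\right)$ is negative unless $\alpha(1+\theta)\leqslant1$. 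For fixed $N$ and $x\to\infty$ your bound still gives $R_N^\infty=O(x^{-\alpha N-1})$, so the asymptotic content of the theorem survives, but the precise inequality (\ref{eq:R_NInf}) is not established by the argument as sketched. (For what it is worth, the paper's own derivation of (\ref{eq:R_NInf}) suffers from the complementary defect noted above, so the exact constant in (\ref{eq:R_NInf}) is delicate under either route.)
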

\begin{proof}
  Without loss of generality, we assume that $x>0$. The case $x<0$ can be obtained with the help of  property~\ref{prop:Inversion} and formulas~(\ref{eq:InversionFormula}). We will perform the inverse Fourier transform of the characteristic function (\ref{eq:CF_formC}). To do this, we use the first relation in the inversion formula (\ref{eq:InverseFormula}). We obtain
  \begin{multline}\label{eq:pdfExpan_tmp0}
    g(x,\alpha,\theta)=\frac{1}{\pi}\Re \int_{0}^{\infty}e^{itx}\hat{g}(t,\alpha,-\theta)dt=
    \frac{1}{\pi}\Re\int_{0}^{\infty}\exp\left\{itx-t^\alpha \exp\left\{i\tfrac{\pi}{2}\alpha\theta\right\}\right\}dt\\
    = \frac{1}{\pi x}\Re\int_{0}^{\infty}\exp\left\{i\tau -(\tau/x)^\alpha\exp\left\{i\tfrac{\pi}{2}\alpha\theta\right\}\right\}d\tau,
  \end{multline}
  where, in the last equality, the integration variable $\tau=tx$ was substituted. Further, we will assume that $x\to\infty$. In addition, we will assume that $x$ will tend to infinity in a way that in the last integral at $\tau\to\infty$ the relation $\tau/x\to0$.  Then, if these conditions are met, the conditions of item~\ref{cond:tu/x2int_lemm} of Lemma~\ref{lem:LoopInt} turn to be met. Therefore, the last integral in the previous expression can be converted using the formula (\ref{eq:loopInt_eq}). As a result, we get
  \begin{equation}\label{eq:pdfExpan_tmp1}
    g(x,\alpha,\theta) =\frac{1}{\pi x}\Re\int_{0}^{\infty}\exp\left\{i\tau e^{i\varphi}-\left(\tfrac{\tau}{x}\right)^\alpha \exp\left\{i\alpha\left(\varphi+\tfrac{\pi}{2}\theta\right)\right\}+i\varphi\right\} d\tau.
  \end{equation}
  
  As it follows from item~\ref{cond:tu/x2int_lemm} of Lemma~\ref{lem:LoopInt} this formula is valid if the conditions $0\leqslant\varphi\leqslant\pi$, $\tau/x\to0$, $0<\alpha\leqslant2$ are met. Next, we take  account of the fact that the value of the argument $\varphi$ in this expression can take any arbitrary value from the interval $0\leqslant\varphi\leqslant\pi$. We choose it in the form
  \begin{equation}\label{eq:varphi0}
  \varphi=\frac{\pi}{2}-\frac{\pi}{2}\theta.
  \end{equation}
  As we can see, for any $-1\leqslant\theta\leqslant1$ inequalities $0\leqslant\frac{\pi}{2}-\frac{\pi}{2}\theta\leqslant\pi$ are satisfied. Now substituting the expression (\ref{eq:varphi0}) in the formula (\ref{eq:pdfExpan_tmp1}) we obtain
  \begin{multline*}
    g(x,\alpha,\theta)=\frac{1}{\pi x}\Re\int_{0}^{\infty} \exp\left\{-\tau e^{-i\frac{\pi}{2}\theta}-\left(\tfrac{\tau}{x}\right)^\alpha e^{i\alpha\frac{\pi}{2}}+i\left(\tfrac{\pi}{2}-\tfrac{\pi}{2}\theta\right)\right\}d\tau\\
    =\frac{1}{\pi x}\Re ie^{-i\frac{\pi}{2}\theta}\int_{0}^{\infty}\exp\left\{-\tau e^{-i\frac{\pi}{2}\theta}-\left(\tfrac{i\tau}{x}\right)^\alpha\right\}d\tau.
  \end{multline*}

  Considering that $x\to\infty$ and $\tau/x\to0$ we can expand $\exp\left\{-\left(\tfrac{i\tau}{x}\right)^\alpha\right\}$ into a Taylor series in the vicinity of the point $\tau/x=0$. As a result we get
  \begin{equation}\label{eq:g_expan_x>0}
    g(x,\alpha,\theta)=g_N^{\infty}(x,\alpha,\theta)+R_N^{\infty}(x,\alpha,\theta),\quad x>0,
  \end{equation}
  where $N$-th partial sum $g_N^\infty(x,\alpha,\theta)$ and  remainder term of the series $R_N^\infty(x,\alpha,\theta)$ have the form
  \begin{align}
    g_N^\infty(x,\alpha,\theta) & =\frac{1}{\pi x}\Re ie^{-i\frac{\pi}{2}\theta}\int_{0}^{\infty} \exp\left\{-\tau e^{-i\frac{\pi}{2}\theta}\right\}\sum_{n=0}^{N-1}\frac{(-1)^n}{n!}\left(\frac{i\tau}{x}\right)^{\alpha n}d\tau,\quad x>0,\nonumber\\
    R_N^\infty(x,\alpha,\theta) & = \frac{1}{\pi x}\Re ie^{-i\frac{\pi}{2}\theta}\int_{0}^{\infty} \exp\left\{-\tau e^{-i\frac{\pi}{2}\theta}\right\} R_N\left(-\left(\tfrac{i\tau}{x}\right)^\alpha\right) d\tau, \quad x>0.\label{eq:R_NInf_0}
  \end{align}
  Here $R_N(y)=\frac{y^N}{N!}e^{y\zeta}$, $(0<\zeta<1)$ is the remainder term in the Lagrange form.
  
 We consider $N$-th partial sum $g_N^\infty(x,\alpha,\theta)$. To calculate the integral, we will change the order of summation and integration
  \begin{equation}\label{eq:g_Ninf_1}
    g_N^\infty(x,\alpha,\theta)= \frac{1}{\pi x}\Re ie^{-i\frac{\pi}{2}\theta} \sum_{n=0}^{N-1}\frac{(-1)^n}{n!} \left(\frac{i}{x}\right)^{\alpha n} \int_{0}^{\infty} \exp\left\{-\tau e^{-i\frac{\pi}{2}\theta}\right\} \tau^{\alpha n}d\tau,\quad x>0.
  \end{equation}
 We will examine the range of the argument values $-\tfrac{\pi}{2}\theta$. The range of admissible values for the parameter $\theta$ is determined by the inequality $|\theta|\leqslant\min(1,2/\alpha-1)$. This shows if $\alpha\leqslant1$, then $-1\leqslant\theta\leqslant1$, and if $1<\alpha\leqslant2$, then $-(2/\alpha-1)\leqslant\theta\leqslant2/\alpha-1$. Thus, for any $0<\alpha\leqslant2$ we obtain $ -\tfrac{\pi}{2}\leqslant-\tfrac{\pi}{2}\theta\leqslant\tfrac{\pi}{2}$. It is important to note that the extreme values of this interval $\pm\tfrac{\pi}{2}$ are reached at $\alpha\leqslant1$ and $\theta=\mp1$.

 To calculate the integral in (\ref{eq:g_Ninf_1}) we will use one well-known formula given in\cite{Bateman_V1_1953} (see \S 1.5, formula~(31))
  \begin{equation*}
   \int_{0}^{\infty}t^{\gamma-1}e^{-ct\cos\beta-ict\sin\beta}dt=\Gamma(\gamma)c^{-\gamma}e^{-i\gamma\beta},\
   -\frac{\pi}{2}<\beta<\frac{\pi}{2},\ \Re\gamma>0\ \text{or}\ \beta=\pm\frac{\pi}{2},\ 0<\Re\gamma<1.
  \end{equation*}
  If we make use of the Euler formula $\cos\beta+i\sin\beta=e^{i\beta}$, then this integral can be represented in the form
  \begin{equation}\label{eq:Gamma_intRepr1}
   \int_{0}^{\infty}t^{\gamma-1}e^{-ct\exp\{i\beta\}}dt=\Gamma(\gamma)c^{-\gamma}e^{-i\gamma\beta},\quad
   -\frac{\pi}{2}<\beta<\frac{\pi}{2},\ \Re\gamma>0\ \text{or}\ \beta=\pm\frac{\pi}{2},\ 0<\Re\gamma<1.
  \end{equation}

  Comparing now the integral in (\ref{eq:Gamma_intRepr1}) and integral in (\ref{eq:g_Ninf_1}) we see that these two integrals coincide with the exception of the case $-\tfrac{\pi}{2}\theta=\pm\tfrac{\pi}{2}$. These two points do not enter the range of admissible values for the argument $\beta$ in the formula (\ref{eq:Gamma_intRepr1}). Consequently, if to exclude these two extreme values from consideration then  for the calculation of the integral in (\ref{eq:g_Ninf_1}) one can make use of (\ref{eq:Gamma_intRepr1}). Excluding from consideration the points $-\frac{\pi}{2}\theta=\pm\frac{\pi}{2}$, we obtain
   \begin{multline}\label{eq:g_NInf_2}
     g_N^\infty(x,\alpha,\theta)=\frac{1}{\pi x}\Re i e^{-i\tfrac{\pi}{2}\theta} \sum_{n=0}^{N-1}\frac{(-1)^n}{n!} \left(\frac{i}{x}\right)^{\alpha n} \Gamma(\alpha n+1) e^{i(\alpha n+1)\tfrac{\pi}{2}\theta}\\
     = \frac{1}{\pi}\sum_{n=0}^{N-1}\frac{(-1)^{n+1}}{n!}\Gamma(\alpha n+1) \sin\left(\tfrac{\pi}{2}\alpha n(1+\theta)\right) x^{-\alpha n-1},\quad x>0,
   \end{multline}
   where it was taken into account that $\Re i e^{-i\tfrac{\pi}{2}\theta} e^{i(\alpha n+1)\tfrac{\pi}{2}\theta}=-\sin\left(\tfrac{\pi}{2}\alpha n(1+\theta)\right)$.
   
   We consider now the remainder term $R_N^\infty(x,\alpha,\theta)$. From the expression  (\ref{eq:R_NInf_0}) we get
   \begin{equation*}
     R_N^\infty(x,\alpha,\theta)=\frac{(-1)^N}{\pi N!} x^{-\alpha N-1} \Re i^{\alpha N+1} e^{-i\tfrac{\pi}{2}\theta}\int_{0}^{\infty} \tau^{\alpha N} \exp\left\{-\tau \exp\left\{-i\tfrac{\pi}{2}\theta\right\} -\left(\tfrac{i\tau}{x}\right)^\alpha \zeta\right\} d\tau,\quad x>0.
   \end{equation*}
   Unfortunately, it is impossible to calculate the integral in this expression, since the exact value of $\zeta$ is unknown. We only know that $0<\zeta<1$. However, this integral can be estimated from above.
   
   Taking into account that we consider the case $x>0$, we obtain
   \begin{equation}\label{eq:absR_Ninf_1}
     R_N^\infty(x,\alpha,\theta)\leqslant |R_N^\infty(x,\alpha,\theta)| \leqslant
     \frac{x^{-\alpha N-1}}{\pi N!}\left|\Re i^{\alpha N+1} e^{-i\tfrac{\pi}{2}\theta}\int_{0}^{\infty}\tau^{\alpha N}e^{-\tau e^{-i\frac{\pi}{2}\theta}} \exp\left\{-\left(\tfrac{i\tau}{x}\right)^\alpha\zeta\right\} d\tau \right|
   \end{equation}
  We take into account that the case $\tau/x\to0$ is considered. As a result, the multiplier $\exp\left\{-\left(\tfrac{i\tau}{x}\right)^\alpha\zeta\right\}$ can be expanded into a Taylor series and leave only the summands of the first order of smallness in the obtained expansion. We have
   \begin{equation*}
   \exp\left\{-\left(\tfrac{i\tau}{x}\right)^\alpha\zeta\right\} =\sum_{k=0}^{\infty}\frac{(-1)^k}{k!}\left(\left(\frac{i\tau}{x}\right)^\alpha\zeta\right)^k\approx1-\zeta\left(\frac{i\tau}{x}\right)^\alpha
   \end{equation*}
  We also consider that $\left|i^{\alpha N+1} e^{-i\tfrac{\pi}{2}\theta}\right|\leqslant1$. As a result, the expression (\ref{eq:absR_Ninf_1}) takes the form
   \begin{multline}\label{eq:R_NInf_est_x>0}
     |R_N^\infty(x,\alpha,\theta)| \leqslant  \frac{x^{-\alpha N-1}}{\pi N!}\left|\Re  \int_{0}^{\infty}\tau^{\alpha N}\exp\left\{-\tau e^{-i\tfrac{\pi}{2}\theta}\right\} \left(1-\zeta\left(\frac{i\tau}{x}\right)^\alpha\right)d\tau\right|\\
     = \frac{x^{-\alpha N-1}}{\pi N!}\left|\Re  \left(\int_{0}^{\infty}\tau^{\alpha N}\exp\left\{-\tau e^{-i\tfrac{\pi}{2}\theta}\right\}d\tau - \zeta\left(\frac{i}{x}\right)^\alpha \int_{0}^{\infty}\tau^{\alpha (N+1)} \exp\left\{-\tau e^{-i\tfrac{\pi}{2}\theta}\right\}d\tau\right)\right|\\
     =\frac{x^{-\alpha N-1}}{\pi N!}\left|\Re\left(\Gamma(\alpha N+1) e^{i\tfrac{\pi}{2}\theta(\alpha N+1)}- \zeta x^{-\alpha}\Gamma(\alpha(N+1)+1)e^{i\tfrac{\pi}{2}\alpha +i\tfrac{\pi}{2}\theta(\alpha (N+1)+1)}\right)\right|\\
     \leqslant \frac{x^{-\alpha N-1}}{\pi N!}\left(\Gamma(\alpha N+1)+\zeta x^{-\alpha}\Gamma(\alpha(N+1)+1)\right)\\
     \leqslant \frac{x^{-\alpha N-1}}{\pi N!}\left(\Gamma(\alpha N+1)+x^{-\alpha}\Gamma(\alpha(N+1)+1)\right),\quad x>0.
   \end{multline}
   Here to calculate the integrals in the second quality the formula (\ref{eq:Gamma_intRepr1}) was used. In the passage in the penultimate inequality, it was taken into account that  $i^\alpha=e^{i\tfrac{\pi}{2}\alpha}$, $\left|e^{i\tfrac{\pi}{2}\theta(\alpha N+1)}\right|\leqslant1$ and $\left|e^{i\tfrac{\pi}{2}\alpha +i\tfrac{\pi}{2}\theta(\alpha (N+1)+1)}\right|\leqslant1$. In the last inequality it was assumed that $\zeta=1$.
   
   Thus, in the case $x>0$ the representation (\ref{eq:g_expan_x>0}) is valid for the probability density in which $g_N^\infty(x,\alpha,\theta)$  is determined by the power series (\ref{eq:g_NInf_2}), and the estimate (\ref{eq:R_NInf_est_x>0}) is valid for the remainder term $R_N^\infty(x,\alpha,\theta)$.  To obtain the expression for the case of negative $x$ we will make use of the inversion property (\ref{eq:InversionFormula}) for the probability density. From this formula, it is clear that the case $x<0$ is reduced to the case $x>0$ with the help of the inversion sign in the asymmetry parameter $\theta$. As we can see it is possible to combine the cases $x>0$ and $x<0$, if to introduce the parameter $\theta^*=\theta\sign x$ and take the coordinate $x$ in absolute value. As a result the formula (\ref{eq:g_expan_x>0}) takes the form (\ref{eq:g_expan}) which is valid both for positive and negative $x$. The obtained expression proves the theorem completely.
   \begin{flushright}
     $\Box$
   \end{flushright}
\end{proof}

The proved theorem gives a representation for the probability density of a strictly stable law in the form of a power series in the cases $x\to\pm\infty$. However, depending on the value of the characteristic exponent $\alpha$ this power series is either convergent or asymptotic. This property of the expansion of the probability density in a power series was pointed out in the introduction. We will examine in more detail the question of the convergence of the resulting power series and formulate the result as a corollary to the theorem which has just been proved.

\begin{corollary}\label{corol:pdfCoverg}
    In the case $\alpha<1$ the series (\ref{eq:g_NInf}) converges for all $x$. In this case for the probability density $g(x,\alpha,\theta)$ for all $\theta$, satisfying the condition $-1<\theta<1$, the expansion in the form of an infinite series is valid
    \begin{equation*}
    g(x,\alpha,\theta)=\frac{1}{\pi}\sum_{n=0}^{\infty}\frac{(-1)^{n+1}}{n!}\Gamma(\alpha n+1)\sin\left(\tfrac{\pi}{2}\alpha n (1+\theta^*)\right)|x|^{-\alpha n-1}.
    \end{equation*}
    In the case $\alpha=1$ the series  (\ref{eq:g_NInf}) converges for all $x$, satisfying the condition $|x|>1$. In this case the probability density $g(x,1,\theta)$ for all $\theta\neq\pm1$ can be represented in the form of an infinite series
    \begin{equation}\label{eq:g_NInf_a1}
      g(x,1,\theta)=\frac{1}{\pi}\sum_{n=0}^{\infty}(-1)^{n+1}\sin\left(\tfrac{\pi}{2}n(1+\theta^*)\right)|x|^{-n-1}, \quad |x|>1
    \end{equation}
    In the case $\alpha>1$ the series (\ref{eq:g_NInf}) is convergent at $N\to\infty$. In this case for the probability density $g(x,\alpha,\theta)$ for any admissible $\theta$ the representation in the form of an asymptotic series is valid
    \begin{equation*}
    g(x,\alpha,\theta)\sim\frac{1}{\pi}\sum_{n=0}^{N-1}\frac{(-1)^{n+1}}{n!}\Gamma(\alpha n+1)\sin\left(\tfrac{\pi}{2}\alpha n (1+\theta^*)\right)|x|^{-\alpha n-1},\quad x\to\pm\infty.
    \end{equation*}
\end{corollary}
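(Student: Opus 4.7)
The entire proof hinges on the remainder estimate (\ref{eq:R_NInf}) established in Theorem~\ref{theor:pdfExpansion}, namely
\begin{equation*}
|R_N^\infty(x,\alpha,\theta)|\leqslant \frac{|x|^{-\alpha N-1}}{\pi N!}\left(\Gamma(\alpha N+1)+|x|^{-\alpha}\Gamma(\alpha(N+1)+1)\right).
\end{equation*}
The plan is to read off each of the three cases by examining the behaviour of this bound (i) as $N\to\infty$ with $x$ fixed (for convergence of the series), and (ii) as $x\to\infty$ with $N$ fixed (for the asymptotic statement in the case $\alpha>1$). Once the remainder is shown to vanish in the appropriate regime, the identity (\ref{eq:g_expan}) immediately yields that the partial sums $g_N^\infty$ converge to $g(x,\alpha,\theta)$, and the inversion property already invoked in the theorem lets us replace $x$ by $|x|$ and $\theta$ by $\theta^*=\theta\,\mathrm{sign}(x)$.

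For the case $0<\alpha<1$, the main step is to show $\Gamma(\alpha N+1)/N!\to 0$ as $N\to\infty$. I would apply Stirling's formula, obtaining $\log[\Gamma(\alpha N+1)/N!]\sim N(1-\alpha)[\text{const}-\log N]$, which tends to $-\infty$ because $1-\alpha>0$. The same estimate applied to $\Gamma(\alpha(N+1)+1)/N!$ shows that the second term in the bound is also controlled by a vanishing sequence. Since $|x|^{-\alpha N-1}$ decays (or at worst stays bounded when normalized) for any fixed $x\ne 0$, one concludes $R_N^\infty\to 0$ for every such $x$, and thus the series converges to $g(x,\alpha,\theta)$ for all $x$. (A quick ratio-test check on the general term, using $\Gamma(\alpha(n+1)+1)/\Gamma(\alpha n+1)\sim(\alpha n)^\alpha$, gives the same conclusion independently.)

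In the case $\alpha=1$, the bound collapses neatly: $\Gamma(N+1)=N!$ and $\Gamma(N+2)=(N+1)!$, so
\begin{equation*}
|R_N^\infty(x,1,\theta)|\leqslant \frac{|x|^{-N-1}}{\pi}\left(1+(N+1)|x|^{-1}\right).
\end{equation*}
For $|x|>1$ the geometric decay $|x|^{-N}$ beats the linear factor $N+1$, so $R_N^\infty\to 0$ as $N\to\infty$, proving (\ref{eq:g_NInf_a1}). One should also mention that for $|x|\leqslant 1$ the same ratio-test calculation shows the series diverges, so $|x|>1$ is exactly the region of convergence.

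For the case $\alpha>1$, the Stirling asymptotics give $\log[\Gamma(\alpha N+1)/N!]\sim(\alpha-1)N\log N\to +\infty$, so the bound grows in $N$ and the series cannot converge; this is the main obstacle, and the resolution is simply to switch viewpoints. For fixed $N$ the bound is $O(|x|^{-\alpha N-1})$ as $|x|\to\infty$, while the next retained term in $g_N^\infty$ is of order $|x|^{-\alpha(N-1)-1}$, so the remainder is negligible compared with the last kept term. This is exactly Poincaré's definition of an asymptotic expansion, which yields the stated relation $g(x,\alpha,\theta)\sim g_N^\infty(|x|,\alpha,\theta^*)$ as $x\to\pm\infty$. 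The only delicate point worth highlighting is ensuring that the full admissible range of $\theta$ is covered in this case: since $|\theta|\leqslant 2/\alpha-1<1$ when $\alpha>1$, the exclusion $\theta\ne\pm 1$ imposed in Theorem~\ref{theor:pdfExpansion} is automatic here.
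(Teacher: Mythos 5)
Your proposal is correct and follows essentially the same route as the paper: a Stirling analysis of the remainder bound as $N\to\infty$ for the convergent cases $\alpha<1$ and $\alpha=1,\ |x|>1$, the observation that $R_N^\infty=O(|x|^{-\alpha N-1})$ for fixed $N$ to obtain the asymptotic expansion when $\alpha>1$, and the inversion property to pass to negative $x$. The one quibble is that for $\alpha>1$ the growth of the remainder \emph{bound} does not by itself establish divergence of the series; the paper settles this by applying the root test directly to the general term $\Gamma(\alpha n+1)x^{-\alpha n-1}/(\pi\,n!)$ (the same Stirling computation), which is exactly the check you already sketch parenthetically for the other cases and would close this small gap.
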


\begin{proof}
    As in the previous theorem, we will consider the case $x>0$ and examine the convergence of the series (\ref{eq:g_NInf}). As one can see, this series is sign-alternating. Therefore, the following estimates are valid
    \begin{multline*}
      g_N^\infty(x,\alpha,\theta)\leqslant\left|g_N^\infty(x,\alpha,\theta)\right|\leqslant \frac{1}{\pi}\sum_{n=0}^{N-1}\frac{\Gamma(\alpha n+1)}{\Gamma(n+1)}\left|(-1)^{n+1} \sin(\tfrac{\pi}{2}\alpha n(1+\theta))\right| x^{-\alpha n-1}\\
      \leqslant \frac{1}{\pi}\sum_{n=0}^{N-1}\frac{\Gamma(\alpha n+1)}{\Gamma(n+1)} x^{-\alpha n-1},\quad x>0.
    \end{multline*}
    
    We will make use of the Cauchy criterion in the limiting form and the Stirling formula
    \begin{equation}\label{eq:Stirling}
      \Gamma(z)\sim e^{-z}z^{z-\frac{1}{2}}\sqrt{2\pi},\quad z\to\infty,\quad |\arg z|<\pi.
    \end{equation}
    As a result, we get
    \begin{multline}\label{eq:RN_term}
      \lim_{n\to\infty} \left(\frac{\Gamma(\alpha n+1)}{\pi \Gamma(n+1)} x^{-\alpha n-1}\right)^{\frac{1}{n}}
      =\lim_{n\to\infty} \frac{x^{-\alpha-\frac{1}{n}}}{\pi^{1/n}} \left(\frac{\exp\left\{-\alpha n-1\right\} (\alpha n+1)^{\alpha n+1-1/2}\sqrt{2\pi}}{\exp\left\{-n-1\right\} (n+1)^{n+1-1/2}\sqrt{2\pi}}\right)^{\frac{1}{n}} \\
      =\lim_{n\to\infty}\frac{x^{-\alpha-\frac{1}{n}}}{\pi^{\frac{1}{n}}} \frac{e^{-\alpha-\frac{1}{n}} (\alpha n+1)^{\alpha+\frac{1}{2n}}}{e^{-1-\frac{1}{n}}(n+1)^{1+\frac{1}{2n}}}
      = \lim_{n\to\infty}\frac{x^{-\alpha-\frac{1}{n}}}{\pi^{1/n}} e^{-\alpha+1}(\alpha n)^{\alpha+
      \frac{1}{2n}} n^{-1-\frac{1}{2n}}\\
      = x^{-\alpha} e^{1-\alpha} \alpha^\alpha\lim_{n\to\infty}(x\pi)^{-\frac{1}{n}}\alpha^{\frac{1}{2n}}n^{\alpha-1}
      =x^{-\alpha} e^{1-\alpha} \alpha^\alpha\lim_{n\to\infty}n^{\alpha-1}
      =\begin{cases}
         0, & \mbox{if } \alpha<1, \\
         x^{-1}, & \mbox{if } \alpha=1,\\
         \infty, & \mbox{if } \alpha>1.
       \end{cases}
    \end{multline}
    Here, when passing in the third equality, the assumption was used that $\alpha n+1\approx \alpha n$ and $n+1\approx n$ at $n\to\infty$.   It can be seen from the obtained expression that in the case of $\alpha<1$ the series (\ref{eq:g_NInf}) converges for all $x>0$. In the case $\alpha>1$ this series diverges for all $x>0$. In the case $\alpha=1$ the series (\ref{eq:g_NInf}) converges for all $x>1$. At $0<x\leqslant1,$ and $\alpha=1$ this series will diverge. We will consider these three cases separately.
    
    We will examine the remainder term (\ref{eq:R_NInf}). Taking into account that we consider the case $x>0$, from the expression (\ref{eq:g_expan}) we obtain
    \begin{multline}\label{eq:g-g_NInf}
      |g(x,\alpha,\theta)-g_N^\infty(|x|,\alpha,\theta^*)|=\\
       = |g(x,\alpha,\theta)-g_N^\infty(x,\alpha,\theta)|\leqslant \frac{x^{-\alpha N-1}}{\pi N!}\left(\Gamma(\alpha N+1)+x^{-\alpha}\Gamma(\alpha(N+1)+1)\right).
    \end{multline}
    We will choose some arbitrary $x$ and fix it. We will calculate the limit of the right side of this expression at $N\to\infty$
    \begin{multline}\label{eq:lim_RN}
      \frac{1}{\pi}\lim_{N\to\infty}\frac{x^{-\alpha N-1}}{\Gamma(N+1)}\left(\Gamma(\alpha N+1)+x^{-\alpha}\Gamma(\alpha(N+1)+1)\right) \\
      =\frac{1}{\pi}\lim_{N\to\infty}x^{-\alpha N-1}\left(\frac{e^{-\alpha N-1} (\alpha N+1)^{\alpha N+1/2} \sqrt{2\pi}+ x^{-\alpha}e^{-\alpha(N+1)-1}(\alpha(N+1)+1)^{\alpha(N+1)+1/2}\sqrt{2\pi}}{e^{-N-1}(N+1)^{\alpha(N+1)+1/2}\sqrt{2\pi}}\right)\\
      = \frac{1}{\pi}\lim_{N\to\infty}\left(e^{N(1-\alpha)} (\alpha N+1)^{\alpha N+\frac{1}{2}}(N+1)^{-N-\frac{1}{2}}+x^{-\alpha} e^{-\alpha(N+1)+N}(\alpha(N+1)+1)^{\alpha(N+1)+\frac{1}{2}}(N+1)^{-N-\frac{1}{2}}\right)\\
      =\frac{1}{\pi}\lim_{N\to\infty}x^{-\alpha N-1} e^{N(1-\alpha)}\alpha^{\alpha N+\frac{1}{2}} N^{N(\alpha-1)}\left(1+|x|^{-\alpha}\right)\\
      =\frac{1}{\pi}\lim_{N\to\infty} \exp\left\{\alpha N\ln\alpha -N(1-\alpha)(\ln N-1)\right\} \left(1+x^{-\alpha}\right)x^{-\alpha N-1}=
      \begin{cases}
        0, & \mbox{if } \alpha<1 \\
        0, & \mbox{if } \alpha=1,\ x>1 \\
        \infty, & \mbox{if } \alpha=1,\ 0<x<1 \\
        \infty, & \mbox{if } \alpha>1.
      \end{cases}
    \end{multline}
    Here the Stirling formula (\ref{eq:Stirling}) was used and when passing in the third equality, it was taken into account that $N+1\approx N$  and $\alpha N+1\approx \alpha N$ at $N\to\infty$.
    
    One can see from the obtained expression that in the case of $x>0$ and $\alpha<1$ the right side of the expression (\ref{eq:g-g_NInf}) is an element of an infinitesimal sequence. This means that the sequence $g_N^\infty(x,\alpha,\theta)$ at $x>0$ converges to the probability density $g(x,\alpha,\theta)$ at $N\to\infty$. Therefore, at any fixed $x>0$ the probability density can be represented in the form an infinite series
    \begin{equation*}
     g(x,\alpha,\theta)=\frac{1}{\pi}\sum_{n=0}^{\infty}\frac{(-1)^{n+1}}{n!}\Gamma(\alpha n+1)\sin\left(\tfrac{\pi}{2}\alpha n (1+\theta)\right)x^{-\alpha n-1},\quad \alpha<1.
    \end{equation*}
    
    According to property~\ref{prop:Inversion} the case $x<0$ is reduced to the case $x>0$ by inverting the sign of the asymmetry parameter $\theta$. If we introduce the parameter $\theta^*=\theta\sign(x)$ and take the coordinate $x$ in absolute value, then one can combine these two cases. As a result, the previous expression will take the form
    \begin{equation*}
      g(x,\alpha,\theta)=\frac{1}{\pi}\sum_{n=0}^{\infty}\frac{(-1)^{n+1}}{n!}\Gamma(\alpha n+1)\sin\left(\tfrac{\pi}{2}\alpha n (1+\theta^*)\right)|x|^{-\alpha n-1},\quad \alpha<1.
    \end{equation*}
    which is valid for any $x$.  This proves the first part of the corollary.
    
   Now we will consider the case $\alpha=1$. It follows from the expression (\ref{eq:lim_RN}) that at the value $x>1$ the right side of the expression (\ref{eq:g-g_NInf}) is an element of an infinitesimal sequence. Taking also into account that in this case the series (\ref{eq:g_NInf}) is convergent, we obtain that the sequence $g_N^\infty(x,1,\theta)$ converges to the density $g(x,1,\theta)$ at $N\to\infty$. Consequently, in the case $\alpha=1$  for any fixed $x>1$ the representation in the form of an infinite series is valid for the probability density
    \begin{equation*}
      g(x,1,\theta)=\frac{1}{\pi}\sum_{n=0}^{\infty}(-1)^{n+1}\sin\left(\tfrac{\pi}{2}n(1+\theta)\right)x^{-n-1},\quad x>1.
    \end{equation*}
    
    Since the case $x>0$ is considered then, this formula is valid only for positive $x$. To generalize this formula to the case of negative $x$ is possible if we use the property of inversion (\ref{eq:InversionFormula}). In the same way as we did in the previous case, we will introduce the parameter $\theta^*=\theta\sign(x)$ and consider $|x|$. This gives an opportunity to combine the cases of negative and positive $x$. As a result, the previous formula takes the form
    \begin{equation*}
      g(x,1,\theta)=\frac{1}{\pi}\sum_{n=0}^{\infty}(-1)^{n+1}\sin\left(\tfrac{\pi}{2}n(1+\theta^*)\right)|x|^{-n-1},\quad |x|>1,
    \end{equation*}
    which is valid for both positive and negative $x$.  Thus, the second item of the corollary is proved.
    
    In the case $\alpha>1$ from the expression (\ref{eq:RN_term}) and (\ref{eq:lim_RN}) follows that in this case the series (\ref{eq:g_NInf}) diverges at $N\to\infty$. However, from the expression (\ref{eq:R_NInf}) follows that for some fixed  $N$
    \begin{equation*}
      R_N^\infty(x,\alpha,\theta)=O\left(x^{-\alpha N-1}\right),\quad x\to\infty.
    \end{equation*}
    Thus, for each fixed $N$ at $x>0$ from the expression (\ref{eq:g_expan}) we obtain
    \begin{equation*}
      g(x,\alpha,\theta)=\frac{1}{\pi}\sum_{n=0}^{N-1}\frac{(-1)^{n+1}}{n!}\Gamma(\alpha n+1)\sin\left(\tfrac{\pi}{2}\alpha n (1+\theta)\right)x^{-\alpha n-1} +O\left(x^{-\alpha N-1}\right), \quad x\to\infty.
    \end{equation*}
    
    This result can be generalized for the case of negative $x$ with the help of the inversion property. Doing in the same way as in the previous cases we get
    \begin{equation*}
      g(x,\alpha,\theta)=\frac{1}{\pi}\sum_{n=0}^{N-1}\frac{(-1)^{n+1}}{n!}\Gamma(\alpha n+1)\sin\left(\tfrac{\pi}{2}\alpha n (1+\theta^*)\right)|x|^{-\alpha n-1} +O\left(|x|^{-\alpha N-1}\right), \quad x\to\pm\infty.
    \end{equation*}
    As a result, we obtained the definition of the asymptotic series. Consequently
    \begin{equation*}
      g(x,\alpha,\theta)\sim\frac{1}{\pi}\sum_{n=0}^{N-1}\frac{(-1)^{n+1}}{n!}\Gamma(\alpha n+1)\sin\left(\tfrac{\pi}{2}\alpha n (1+\theta^*)\right)|x|^{-\alpha n-1}, \quad x\to\pm\infty, \quad \alpha>1.
    \end{equation*}
    Thus, the corollary has been proved completely.
    \begin{flushright}
      $\Box$
    \end{flushright}
\end{proof}

\begin{remark}\label{remark:pdf_a1}
In the case $\alpha=1$ for any $-1<\theta<1$ in the domain $|x|>1$ the series (\ref{eq:g_NInf_a1}) converges to the density (\ref{eq:g(x)_a=1}).
\end{remark}
\begin{proof}
  For the proof we will consider the density (\ref{eq:g(x)_a=1}) and will show that the expansion of this density into a Taylor series at $x\to\infty$ has the form (\ref{eq:g_NInf_a1}). Using the reduction formulas $\cos\left(\tfrac{\pi}{2}\theta\right)=\sin\left(\tfrac{\pi}{2}+\tfrac{\pi}{2}\theta\right)$, $\sin\left(\tfrac{\pi}{2}\theta\right)=-\cos\left(\tfrac{\pi}{2}+\tfrac{\pi}{2}\theta\right)$ we write the densities (\ref{eq:g(x)_a=1}) in the form
  \begin{equation}\label{eq:g(x)_a=1_tmp0}
    g(x,1,\theta)=\frac{\sin\left(\frac{\pi}{2}(1+\theta)\right)}{\pi\left(x^2+2x\cos\left(\frac{\pi}{2}(1+\theta)\right)+1\right)}.
  \end{equation}
  
  Next, since we need to find the expansion of the density in a series at $x\to\infty$ we will substitute the variable $y=1/x$ in this expression. With substitution of the variable like this the behavior of the density $g(x,1,\theta)$ at $x\to\infty$ will correspond to the behavior of the density $g(y,1,\theta)$ at $y\to0$. Thus, if we change the variable $x=1/y$ in the expression (\ref{eq:g(x)_a=1_tmp0})  and expand the obtained expression into a Taylor series in the vicinity of the point $y=0$, and then, in the obtained expression we again go to the variable $x$, then we will get the expansion of the density  (\ref{eq:g(x)_a=1}) at $x\to\infty$.
  
  We consider the expression $g(x,1,\theta)dx$, which represents the measure element $dP$. Performing the substitution of the variable $x=1/y$ we obtain
    \begin{equation*}
      g(x,1,\theta)dx=- \frac{\sin\left(\frac{\pi}{2}(1+\theta)\right)}{\pi\left(y^{-2}+2y^{-1}\cos\left(\frac{\pi}{2}(1+\theta)\right)+1\right)} \frac{dy}{y^2}=  g(y,1,\theta)dy,
    \end{equation*}
    where
    \begin{equation*}
      g(y,1,\theta)=-\frac{\sin\left(\frac{\pi}{2}(1+\theta)\right)}{\pi\left(y^{2}+2y\cos\left(\frac{\pi}{2}(1+\theta)\right)+1\right)}
    \end{equation*}
  
  Now we represent this function in the following form
    \begin{equation*}
      g(y,1,\theta)=-\frac{\sin\left(\frac{\pi}{2}(1+\theta)\right)}{\pi} f(g(y)),
    \end{equation*}
    where
    \begin{equation}\label{eq:f_g_def}
      f\equiv f(g)=1/g,\quad g\equiv g(y)=y^{2}+2y\cos\left(\tfrac{\pi}{2}(1+\theta)\right)+1.
    \end{equation}
   
   Now we will expand this function $g(y,1,\theta)$ into a Taylor series in the vicinity of the point $y=0$. Since this function is infinitely differentiable, we get
    \begin{multline}\label{eq:g(y)_seres}
      g(y,1,\theta)=g(0,1,\theta)+ \sum_{n=1}^{\infty}\frac{1}{n!}\left.\frac{d^n g(y,1,\theta)}{dy^n}\right|_{y=0}y^n\\
      =-\frac{1}{\pi}\sin\left(\tfrac{\pi}{2}(1+\theta)\right)-
      \frac{1}{\pi}\sin\left(\tfrac{\pi}{2}(1+\theta)\right)\sum_{n=1}^{\infty}\frac{1}{n!}\left.\frac{d^n f(g(y))}{dy^n}\right|_{y=0}y^n.
    \end{multline}
   
   Taking into account that the function $f(g(y))$ is a complex function then to calculate the $n$-th derivative from this function we will use Bruno’s formula (see~\cite{Riordan1958_en})
    \begin{equation}\label{eq:df(g(y))}
     \frac{d^n f(g(y))}{dy^n}=Y_n(fg_1,fg_2,\dots,fg_n),
    \end{equation}
    where $Y_n(fg_1,fg_2,\dots,fg_n)$ are Bell polynomials
    \begin{equation}\label{eq:BellPolinom}
     Y_n(fg_1,fg_2,\dots,fg_n)=\sum \frac{n!f_m}{k_1!k_2!\dots k_n!}\left(\frac{g_1}{1!}\right)^{k_1} \left(\frac{g_2}{2!}\right)^{k_2}\dots \left(\frac{g_n}{n!}\right)^{k_n}.
    \end{equation}
    Here $f^m\equiv f_m$, $m=k_1+k_2+\dots+k_n$, the sum is taken for all solutions to the equation
    \begin{equation}\label{eq:BelPolSumEq}
     k_1+2k_2+\dots+n k_n=n,\quad k_j\geqslant0,\quad j=1,2,\dots,n.
    \end{equation}
    and
    \begin{equation*}
     f_m=\left.\frac{d^mf(g)}{dg^m}\right|_{g=g(y)},\quad g_j=\frac{d^j g(y)}{dy^j }.
    \end{equation*}
    
    In view of (\ref{eq:f_g_def}), we obtain
    \begin{equation}\label{eq:fk_g}
     f_m=(-1)^m\left.\frac{m!}{g^{m+1}}\right|_{g=g(y)}=\frac{(-1)^m m!}{\left(y^2+2y \cos\left(\tfrac{\pi}{2}(1+\theta)\right)+1\right)^{m+1}}.
    \end{equation}
    For coefficients $g_j$ we have
    \begin{equation}\label{eq:gk_g}
     g_1=2y+2\cos\left(\tfrac{\pi}{2}(1+\theta)\right),\quad g_2=2,\quad g_3 =g_4=\dots=g_n=0.
    \end{equation}
    This shows that in the expression (\ref{eq:BellPolinom}) the summands remain in the sum that satisfy the equation
    \begin{equation}\label{eq:BelPolSumEq_g}
     k_1+2k_2=n.
    \end{equation}  
  Indeed, in the expression (\ref{eq:BellPolinom}) the summation is performed over all solutions to the equation (\ref{eq:BelPolSumEq}). In case, if the solution $k_j\neq0,\ j=3,4,\dots,n$, then the corresponding summand in the sum will be equal to zero, since $g_j=0$, $j=3,4,\dots,n$. If $k_j=0$, $j=3,4,\dots,n$, then the multiplier $(g_j/j!)^{k_j}=1$, since  $0^0=1$.  Therefore, in the expression (\ref{eq:BellPolinom}) there are terms in the sum that satisfy the solution to the equation (\ref{eq:BelPolSumEq_g}). This greatly simplifies the summation. From the equation (\ref{eq:BelPolSumEq_g}) it follows that $k_1=n-2k_2$. Considering that $k_1\geqslant0$ and $k_2\geqslant0$, we get $k_2=0,1,2,\dots,\left[\tfrac{n}{2}\right]$, where $[A]$ denotes the integer part of the number $A$. This gives us an opportunity in the sum (\ref{eq:BellPolinom})  to enter the summation index directly.
  
  In view of the foregoing, the formula (\ref{eq:BellPolinom}) becomes
    \begin{equation*}
     Y_n(fg_1,fg_2)=\sum_{k=0}^{\left[\frac{n}{2}\right]}\frac{n!f_{n-k}}{(n-2k)!k!}\left(\frac{g_1}{1!}\right)^{n-2k} \left(\frac{g_2}{2!}\right)^k,
    \end{equation*}
    where the relation $k_1=n-2k_2$ was used and the summation index $k\equiv k_2$ was introduced. Now substituting this relation in (\ref{eq:df(g(y))}) and using (\ref{eq:fk_g}) and (\ref{eq:gk_g}), we obtain
    \begin{equation*}
     \frac{d^n f(g(y))}{dy^n}=\sum_{k=0}^{\left[\tfrac{n}{2}\right]} \frac{(-1)^{n-k} n! (n-k)!}{k!(n-2k)!}\frac{\left(2y+2\cos\left(\tfrac{\pi}{2}(1+\theta)\right)\right)^{n-2k}} {\left(y^2+2y\cos\left(\tfrac{\pi}{2}(1+\theta)\right)+1\right)^{n-k+1}}.
    \end{equation*}
    
    Now we will calculate the value of this derivative at the point $y=0$. It is easy to see that
    \begin{equation}\label{eq:dg(0)dx}
     \left.\frac{d^n f(g(y)}{dy^n}\right|_{y=0}=(-1)^n n!\sum_{k=0}^{\left[\frac{n}{2}\right]} \frac{(-1)^{k} (n-k!)}{k!(n-2k)!}\left(2\cos\left(\tfrac{\pi}{2}(1+\theta)\right)\right)^{n-2k},
    \end{equation}
    where it was taken into account that $(-1)^{-k}=(-1)^k$.

    Next, we will use the general formula for $\sin(n\varphi)$ (see, for example, \cite{Schaum2018})
    \begin{equation*}
     \sin(n\varphi)=\sin\varphi\sum_{k=0}^{\left[\frac{n-1}{2}\right]}(-1)^{k}\frac{(n-k-1)!}{k!(n-2k-1)!}(2\cos\varphi)^{n-2k-1}.
    \end{equation*}
    Using this formula in (\ref{eq:dg(0)dx}), we get
    \begin{equation*}
    \left.\frac{d^n f(g(y))}{dy^n}\right|_{x=0}=\frac{(-1)^n n!}{\sin\left(\tfrac{\pi}{2}(1+\theta)\right)}\sin\left(\tfrac{\pi}{2}(n+1)(1-\theta)\right).
    \end{equation*}
    
    Now using this expression in (\ref{eq:g(y)_seres}), we obtain
    \begin{multline*}
     g(y,1,\theta)=-\frac{1}{\pi}\sin\left(\tfrac{\pi}{2}(1+\theta)\right)-
     \sum_{n=1}^{\infty}\frac{(-1)^n}{\pi}\sin(\tfrac{\pi}{2}(n+1)(1+\theta)) y^n\\
     = -\sum_{n=0}^{\infty}\frac{(-1)^n}{\pi}\sin(\tfrac{\pi}{2}(n+1)(1+\theta)) y^n.
    \end{multline*}
    We will get back to the variable $x$. By substituting the variable $x=1/y$, we obtain
    \begin{equation*}
      g(y,1,\theta)dy=\frac{dx}{x^2} \sum_{n=0}^{\infty}\frac{(-1)^n}{\pi}\sin(\tfrac{\pi}{2}(n+1)(1+\theta)) x^{-n}= g(x,1,\theta)dx,
    \end{equation*}
    where
    \begin{equation*}
      g(x,1,\theta)=\sum_{n=0}^{\infty}\frac{(-1)^n}{\pi}\sin(\tfrac{\pi}{2}(n+1)(1+\theta)) x^{-n-2}.
    \end{equation*}

    Substituting now in this expression the summation index $n+1=k$ and considering that $(-1)^{k-1}=(-1)^{k+1}$, we get
    \begin{equation*}
      g(x,1,\theta)=\sum_{k=1}^{\infty}\frac{(-1)^{k+1}}{\pi}\sin(\tfrac{\pi}{2}k(1+\theta)) x^{-k-1}
      =\sum_{k=0}^{\infty}\frac{(-1)^{k+1}}{\pi}\sin(\tfrac{\pi}{2}k(1+\theta)) x^{-k-1}.
    \end{equation*}
    Thus, the expansion of the plane (\ref{eq:g(x)_a=1}) into an infinite Taylor series at $x\to\infty$ exactly coincides with the series (\ref{eq:g_NInf_a1}). The domain of convergence of this series was examined in corollary~\ref{corol:pdfCoverg}, which shows that this series converges at $|x|>1$. This proves the corollary completely.
    \begin{flushright}
      $\Box$
    \end{flushright}
\end{proof}

Theorem~\ref{theor:pdfExpansion} makes it possible to calculate the probability density of a strictly stable law using the power series obtained in this theorem. For the practical implementation of this possibility, a criterion is needed that can help to determine the number of terms in the sum (\ref{eq:g_NInf}), required to calculate the density at a specified point $x$ with a specified accuracy. Such a criterion can be obtained if to use the estimate of the remainder term (\ref{eq:R_NInf}).

Indeed, from the expressions (\ref{eq:g_expan}) and (\ref{eq:R_NInf}) it follows that
\begin{equation*}	
  |g(x,\alpha,\theta)-g_N^{\infty}(|x|,\alpha,\theta^*)|\leqslant\frac{|x|^{-\alpha N-1}}{\pi N!} \left(\Gamma(\alpha N+1)+|x|^{-\alpha}\Gamma(\alpha(N+1)+1)\right)
\end{equation*}
If now for the given value $N$ we specify the value of the absolute error $$
|g(x,\alpha,\theta)-g_N^{\infty}(|x|,\alpha,\theta^*)|\leqslant \varepsilon, $$
then this allows us to introduce the threshold coordinate $x_\varepsilon^N$. The value of the threshold coordinate is found from the solution to the equation
\begin{equation}\label{eq:x_eps_eq}
  \varepsilon=\frac{\left|x_\varepsilon^N\right|^{-\alpha N-1}}{\pi N!} \left(\Gamma(\alpha N+1)+\left|x_\varepsilon^N\right|^{-\alpha}\Gamma(\alpha(N+1)+1)\right).
\end{equation}

The threshold coordinate determines the range of coordinates within which the absolute value of the calculation error using the series (\ref{eq:g_NInf}) will not exceed the specified level of accuracy $\varepsilon$. In the case under consideration, this domain is determined by the inequality $|x|\geqslant x_\varepsilon^N$. Thus,
\begin{equation*}
  \left|g(x,\alpha,\theta)-g_N^\infty(|x|,\alpha,\theta^*)\right|\leqslant\varepsilon,\quad |x|\geqslant x_\varepsilon^N.
\end{equation*}
Unfortunately, to obtain an explicit solution to the equation (\ref{eq:x_eps_eq}) with respect to the unknown $x_\varepsilon^N$ is not possible. Therefore, to solve this equation, one must use numerical methods that easily give an opportunity to find the threshold coordinate $x_\varepsilon^N$ for specified values $\varepsilon, N, \alpha$.

Taking into account the foregoing, we can write a formula for calculating the probability density for large $x$
\begin{equation}\label{eq:g_NInf_calc}
  g(x,\alpha,\theta)=g_N^\infty(|x|,\alpha,\theta^*),\quad |x|\geqslant x_\varepsilon^N.
\end{equation}
Here $g_N^\infty(x,\alpha,\theta)$ is determined by the series (\ref{eq:g_NInf}), and the threshold coordinate $x_\varepsilon^N$ is found from the solution to the equation (\ref{eq:x_eps_eq}) at specified values of the parameter $\alpha$, the number of summands $N$ in the sum (\ref{eq:g_NInf}) and level of accuracy $\varepsilon$. At the same time, it can be guaranteed that the absolute error in calculating the probability density using this formula will not exceed $\varepsilon$, but in reality it will be much less than this value.

\begin{figure}
  \centering
  \includegraphics[width=0.475\textwidth]{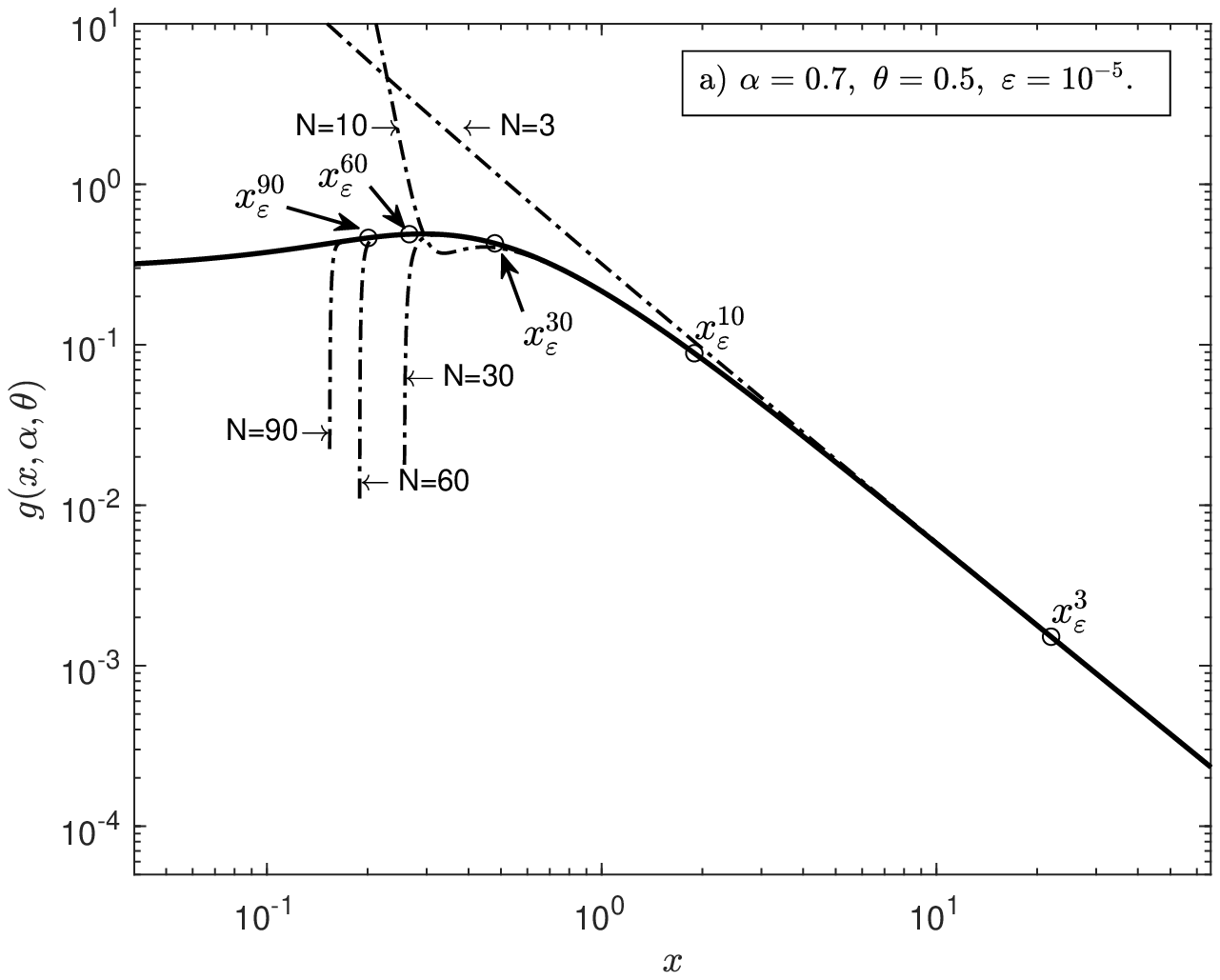}\hfill
  \includegraphics[width=0.475\textwidth]{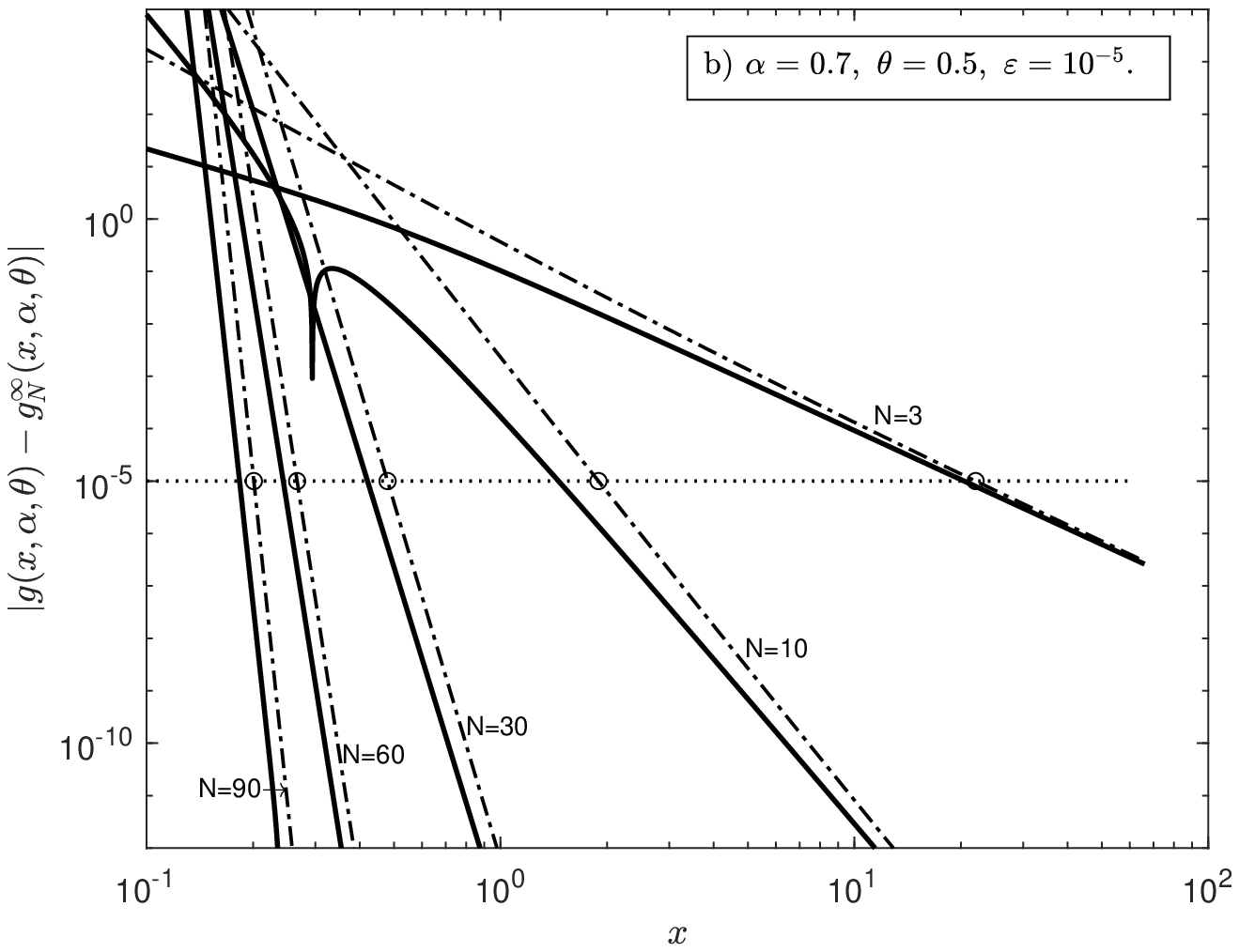}\\
  \caption{a) Probability density $g(x,\alpha,\theta)$ for the parameter values shown in the figure. The solid curve is the integral representation (\ref{eq:g(x)_int}), the dash-dotted curves are the formula  (\ref{eq:g_NInf_calc}) for different values of the number of terms $N$ in the sum (\ref{eq:g_NInf}). The circles show the position of the threshold coordinate $x_\varepsilon^N$ for corresponding values of $N$ and the specified accuracy level $\varepsilon$. (b) Graph of the absolute error of calculating the density $g(x,\alpha,\theta)$ using the formula (\ref{eq:g_NInf_calc}). The solid curves are the exact value of the absolute error $|g(x,\alpha,\theta)-g_N^\infty(x,\alpha,\theta)|$, the dash-dotted curves are the estimate of the remainder term (\ref{eq:R_NInf}), the dotted line is the specified accuracy level $\varepsilon$, the circles show the position of the threshold coordinate $x_\varepsilon^N$.}\label{fig:fig_a07}
\end{figure}
\begin{figure}
  \centering
  \includegraphics[width=0.475\textwidth]{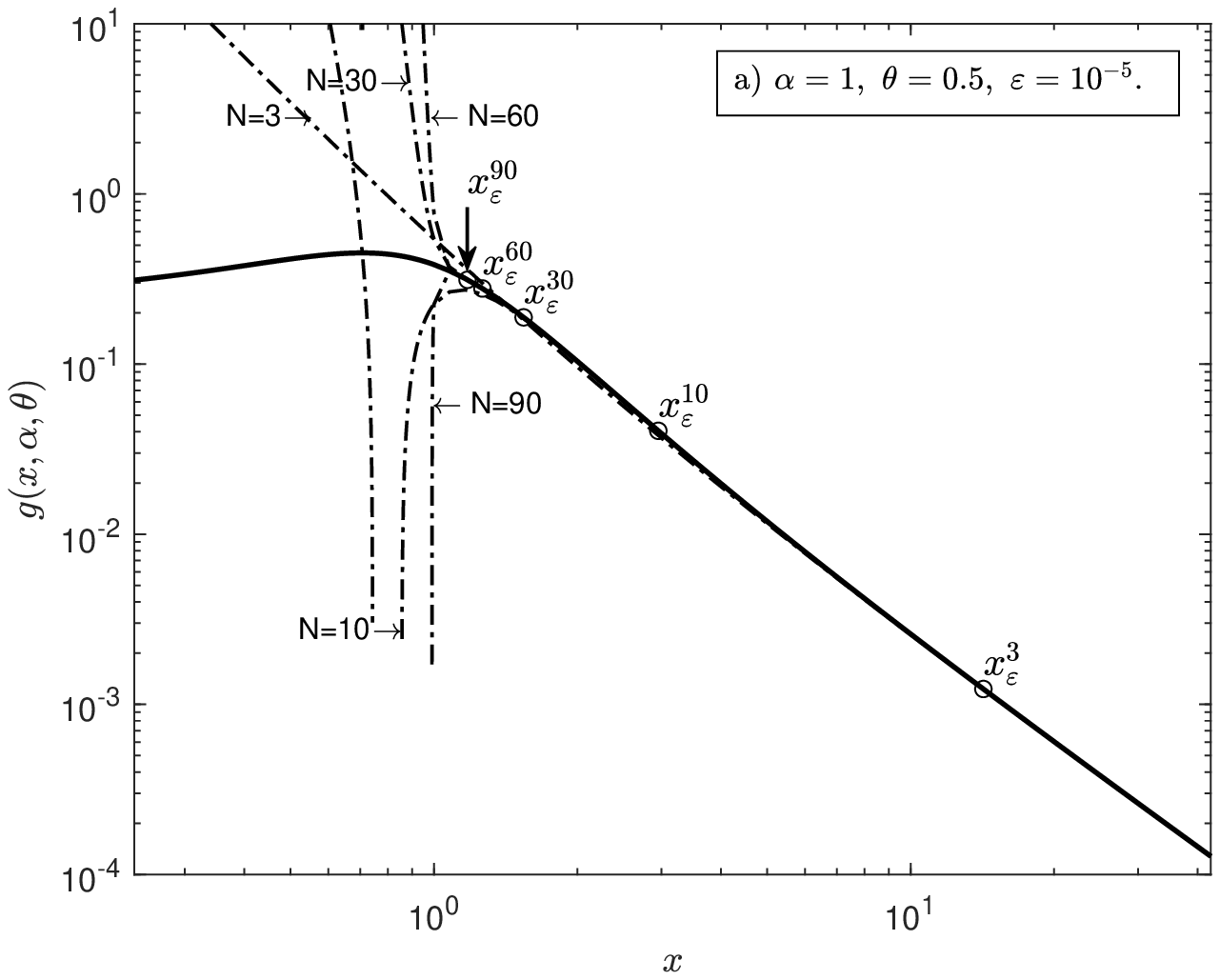}\hfill
  \includegraphics[width=0.475\textwidth]{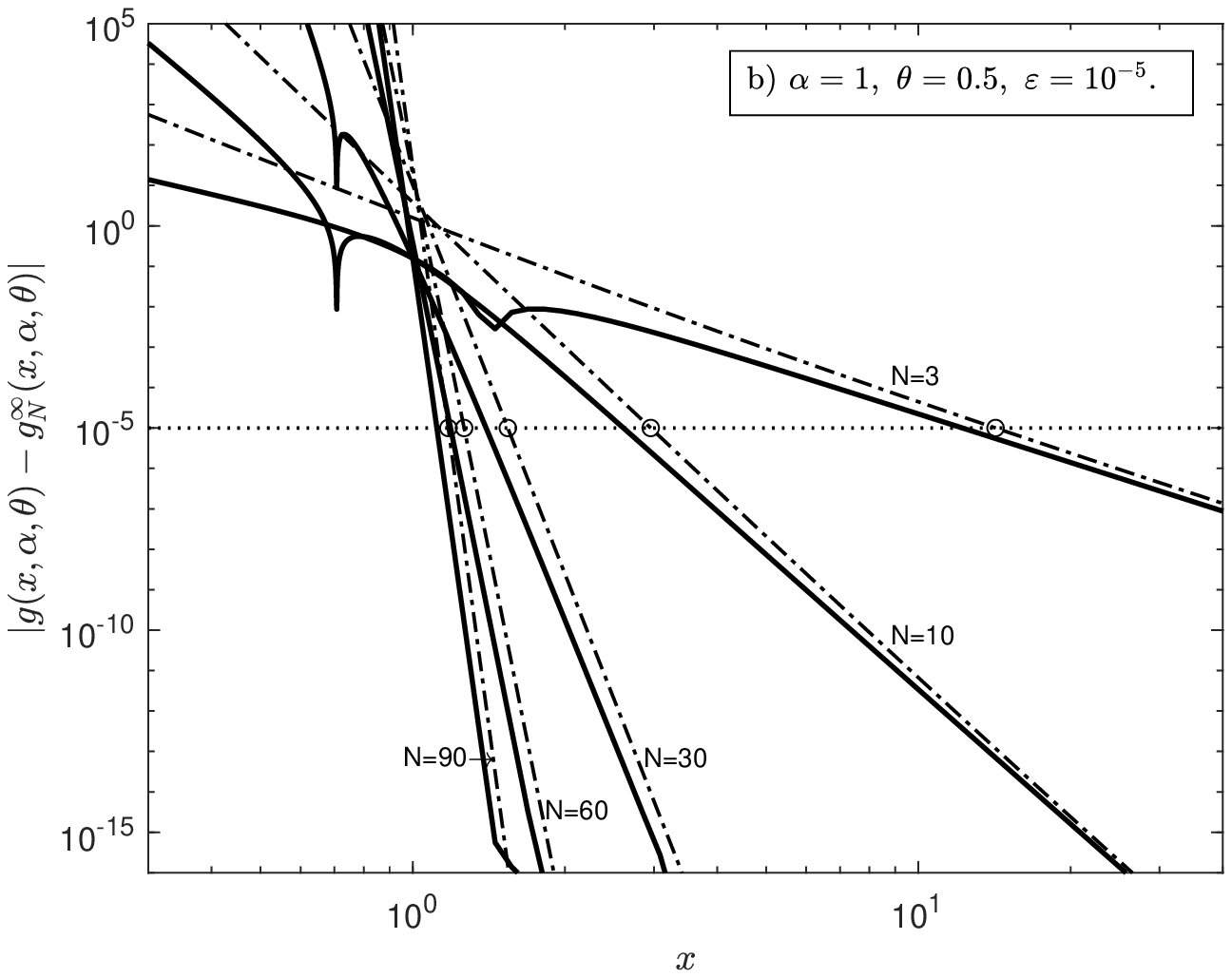}\\
  \caption{a) Probability density $g(x,\alpha,\theta)$ for the parameter values shown in the figure. The solid curve is the formula (\ref{eq:g(x)_a=1}), the dash-dotted curves are the formula (\ref{eq:g_NInf_calc}) for different values of the number of terms $N$ in the sum(\ref{eq:g_NInf}). The circles show the position of the threshold coordinate $x_\varepsilon^N$ for corresponding values of $N$ and the specified accuracy level $\varepsilon$. (b) ) Graph of the absolute error of calculating the density $g(x,\alpha,\theta)$ using the formula (\ref{eq:g_NInf_calc}). Solid curves are the exact value of the absolute error $|g(x,\alpha,\theta)-g_N^\infty(x,\alpha,\theta)|$, the dash-dotted curves are the estimate of the remainder term  (\ref{eq:R_NInf}), the dotted line is the specified accuracy level  $\varepsilon$, the circles show the position of the threshold coordinate   $x_\varepsilon^N$.}\label{fig:fig_a1}
\end{figure}

The results of calculating the probability density $g(x,\alpha,\theta)$ are given in Fig.~\ref{fig:fig_a07}a,~\ref{fig:fig_a1}a~and~\ref{fig:fig_a13}a for the specified values $\alpha$ and $\theta$ in the figures. In these figures the solid curve corresponds to the exact value of the probability density $g(x,\alpha,\theta)$. In the case $\alpha\neq1$  the integral representation (\ref{eq:g(x)_int}), was used for calculating the density $g(x,\alpha,\theta)$ and in the case $\alpha=1$ the formula  (\ref{eq:g(x)_a=1}) was used. The dashed-dotted curves correspond to the probability density calculated using the formula (\ref{eq:g_NInf_calc}) for values $N=3,10,30,60,90$. The circles denote the position of the threshold coordinate $x_\varepsilon^N$ for the selected level of accuracy $\varepsilon=10^{-5}$ and used values of $N$. Numerical values of the threshold coordinate were obtained using the numerical solution to the equation (\ref{eq:x_eps_eq}).   Figures~\ref{fig:fig_a07}b,~\ref{fig:fig_a1}b~and~\ref{fig:fig_a13}b demonstrate the absolute error of calculating the probability density using the formula (\ref{eq:g_NInf_calc}). In these figures the solid curves correspond to the exact value of the absolute error. To calculate it the formula  $\left|g(x,\alpha,\theta)-g_N^\infty(x,\alpha,\theta)\right|$ was used, where $g(x,\alpha,\theta)$ is the integral representation (\ref{eq:g(x)_int}) at $\alpha\neq1$ and the formula (\ref{eq:g(x)_a=1}) at $\alpha=1$, and $g_N^\infty(x,\alpha,\theta)$ is the formula (\ref{eq:g_NInf_calc}). The dashed-dotted curves correspond to the estimate of the remainder term (\ref{eq:R_NInf}), and the dotted straight line shows the position of the selected accuracy level $\varepsilon$. As in the previous figure, the circles show the position of the threshold coordinate $x_\varepsilon^N$ for the used values of $N$.

Figures~\ref{fig:fig_a07}b,~\ref{fig:fig_a1}b~and~\ref{fig:fig_a13}b clearly demonstrate that at the values $x\geqslant x_\varepsilon^N$ for all used values of $N$ the solid curves lie lower than the dashed-dotted curves. This means that the exact value of the absolute error is less than the estimate (\ref{eq:R_NInf}). Thus, at the values $|x|\geqslant x_\varepsilon^N$ the formula (\ref{eq:g_NInf_calc}) can be used for calculating the probability density. At the same time, it can be guaranteed that the value of the absolute error will not exceed the selected accuracy level $\varepsilon$, but in reality, it will be less than $\varepsilon$.

We will note one more feature, which is clearly visible from the presented figures. Figure~\ref{fig:fig_a07} shows calculations for the case $\alpha=0.7$. Figures~\ref{fig:fig_a07}a~and~\ref{fig:fig_a07}b show that as  $N$ increases the value of the threshold coordinate $x_\varepsilon^N$ decreases.  This is clearly seen from the position of the circles, each of which corresponds to the point $x_\varepsilon^N$ for each of the used $N$. It is quite clear that $x_\varepsilon^3>x_\varepsilon^{10}>x_\varepsilon^{30}>x_\varepsilon^{60}>x_\varepsilon^{90}$.  This behavior of the threshold coordinate (reducing the value of $x_\varepsilon^N$ with $N$ increasing) is the result of corollary~\ref{corol:pdfCoverg}. Indeed, this corollary shows that in the case of $\alpha<1$ at $N\to\infty$ the series (\ref{eq:g_NInf}) is convergent. Consequently, as the number of terms $N$ in the sum (\ref{eq:g_NInf}) increases, the accuracy of calculating the density $g(x,\alpha,\theta)$ at some fixed point $x$ will increase. This leads to the fact that at $\alpha<1$ the range of values of the coordinate $x$, at which the inequality $|g(x,\alpha,\theta)-g_N^\infty(x,\alpha,\theta)|\leqslant\varepsilon$ is valid, with an increase of $N$ expands. As a result, the value of the threshold coordinate $x_\varepsilon^N$, which is the solution to the equation $|g(x_\varepsilon^N,\alpha,\theta)-g_N^\infty(x_\varepsilon^N,\alpha,\theta)|=\varepsilon$, will decrease in absolute value with an increase of $N$, i.e. $\left|x_\varepsilon^N\right|\to0$ at $N\to\infty$.

This result can be obtained if in the equation (\ref{eq:x_eps_eq}) to pass to the limit $N\to\infty$. By passing to the limit and taking into account that $N+1\approx N$ at $N\to\infty$, for the left part of the equation (\ref{eq:x_eps_eq}) we obtain
\begin{equation*}
  \frac{1}{\pi\Gamma(N+1)}\left(\frac{\Gamma(\alpha N+1)}{|x_\varepsilon^N|^{\alpha N+1}}+ \frac{\Gamma(\alpha(N+1)+1)}{|x_\varepsilon^N|^{\alpha(N+1)+1}}\right)
  \approx  \frac{2}{\pi\Gamma(N+1)}\frac{\Gamma(\alpha N+1)}{|x_\varepsilon^N|^{\alpha N+1}}.
\end{equation*}
If now we substitute this expression instead of the left part of the equation (\ref{eq:x_eps_eq}), then we will see that the obtained equation is easy to solve relative to $|x_\varepsilon^N|$. As a result the solution to this equation has the form
\begin{equation*}
  \left|x_\varepsilon^N\right|=\left(\frac{2}{\pi\varepsilon}\frac{\Gamma(\alpha N+1)}{\Gamma(N)}\right)^{\frac{1}{\alpha N+1}}.
\end{equation*}

We find the limit of this solution at $N\to\infty$. Considering that $\alpha N+1\approx \alpha N$ at $N\to\infty$ and using the Stirling’s formula (\ref{eq:Stirling}),  we obtain
\begin{multline}\label{eq:x_eps_pdf_lim}
  \lim_{N\to\infty} \left|x_\varepsilon^N\right|
  =\lim_{N\to\infty} \left(\frac{2}{\pi\varepsilon}\frac{\Gamma(\alpha N+1)}{\Gamma(N)}\right)^{\frac{1}{\alpha N+1}}
  \approx\lim_{N\to\infty} \left(\frac{2}{\pi\varepsilon}\frac{\Gamma(\alpha N)}{\Gamma(N)}\right)^{\frac{1}{\alpha N}}\\
  =\lim_{N\to\infty} \left(\frac{2}{\pi\varepsilon} \frac{e^{-\alpha N}(\alpha N)^{\alpha N-1/2}\sqrt{2\pi}}{e^{-N} N^{N-1/2}\sqrt{2\pi}}\right)^{\frac{1}{\alpha N}}
  =\lim_{N\to\infty} \left(\frac{2}{\pi\varepsilon} e^{N(1-\alpha)}\alpha^{\alpha N-1/2}N^{N(\alpha-1)}\right)^{\frac{1}{\alpha N}}\\
  =e^{\frac{1-\alpha}{\alpha}}\alpha \lim_{N\to\infty}\left(\frac{2}{\pi\varepsilon\sqrt{\alpha}}\right)^{\frac{1}{\alpha N}}N^{\frac{\alpha-1}{\alpha}}=\begin{cases}
                                   0, & \alpha<1 \\
                                   1, & \alpha=1\\
                                   \infty, & \alpha>1.
                                 \end{cases}
\end{multline}

It is clear from the obtained expression that $\lim_{N\to\infty}\left|x_\varepsilon^N\right|=0$ if $\alpha<1$ . This means that at $\alpha<1$ predetermined calculation accuracy $\varepsilon$ is achieved on the entire number line $-\infty<x<\infty$. This result is a direct consequence of item~1 of corollary~\ref{corol:pdfCoverg}, where in the case $\alpha<1$ the series convergence (\ref{eq:g_NInf}) is proved on the entire number line at  $N\to\infty$.

A similar behavior of the threshold coordinate value is also observed in the case of $\alpha=1$ (see Fig.~\ref{fig:fig_a1}). The figure clearly shows that  $x_\varepsilon^3>x_\varepsilon^{10}>x_\varepsilon^{30}>x_\varepsilon^{60}>x_\varepsilon^{90}$, i.е. as the value of $N$ increases the value of the threshold coordinate $x_\varepsilon^N$ decreases. However, it is clear from the expression (\ref{eq:x_eps_pdf_lim}) that in this case $\lim_{n\to\infty}\left|x_\varepsilon^N\right|=1$. This means that in the case $\alpha=1$ at $N\to\infty$ the prespecified accuracy is achieved only in the range of coordinates $|x|>1$. This result is also a direct consequence of item 2 of corollary~\ref{corol:pdfCoverg}.
\begin{figure}
  \centering
  \includegraphics[width=0.475\textwidth]{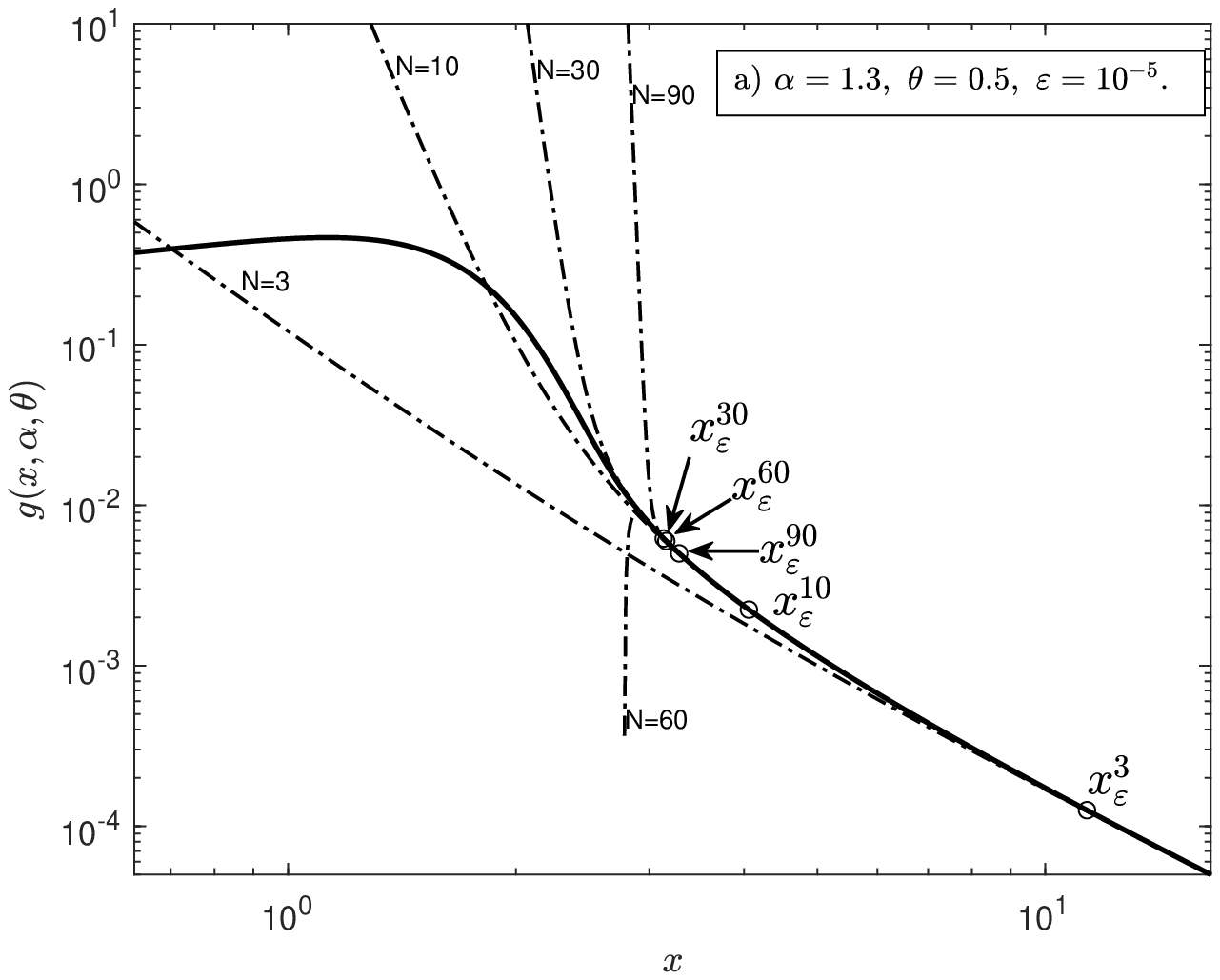}\hfill
  \includegraphics[width=0.475\textwidth]{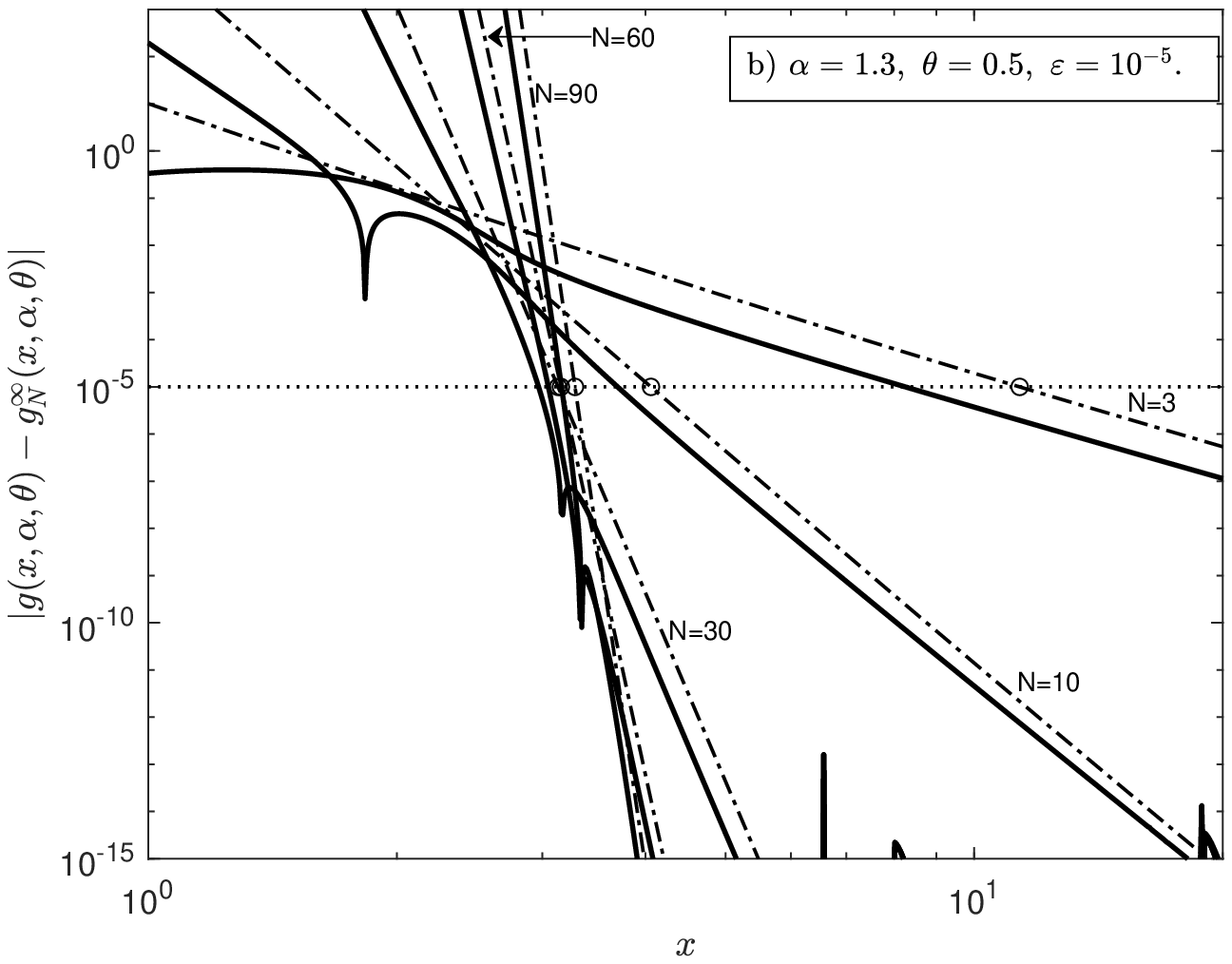}\\
  \caption{a) Probability density $g(x,\alpha,\theta)$ for the parameter values shown in the figure. The solid curve is the integral representation (\ref{eq:g(x)_int}), the dash-dotted curves are the formula (\ref{eq:g_NInf_calc}) for different values of the number of terms $N$ in the sum (\ref{eq:g_NInf}). The circles show the position of the threshold coordinate $x_\varepsilon^N$ for the corresponding values of $N$ and specified accuracy level $\varepsilon$. (b) Graph of the absolute error of calculating the density $g(x,\alpha,\theta)$ using the formula (\ref{eq:g_NInf_calc}). Solid curves are the exact value of the absolute error $|g(x,\alpha,\theta)-g_N^\infty(x,\alpha,\theta)|$, dash-dotted curves are the estimate of the remainder term (\ref{eq:R_NInf}), the dotted line is the specified accuracy level $\varepsilon$, the circles show the position of the threshold coordinate $x_\varepsilon^N$.}\label{fig:fig_a13}
\end{figure}

In the case $\alpha>1$ the behavior of the coordinate $x_\varepsilon^N$ changes with increasing $N$.  Figure~\ref{fig:fig_a13} shows the results of calculations for the case $\alpha=1.3$. Also, as in the previous cases, the position of the threshold coordinate is shown by circles (see, for example, Fig.~\ref{fig:fig_a13}a). As one can see from the figure, as the value of $N$ increases, the value of the threshold coordinate first decreases $x_\varepsilon^3>x_\varepsilon^{10}>x_\varepsilon^{30}$. However, a further increase in $N$ leads to an increase in the value of the threshold coordinate: $x_\varepsilon^{30}<x_\varepsilon^{60}<x_\varepsilon^{90}$. Such behavior of the threshold coordinate in the case of $\alpha>1$ is in full accordance with corollary~\ref{corol:pdfCoverg}.

Indeed, this corollary shows that in the case of $\alpha>1$ at $N\to\infty$ the series (\ref{eq:g_NInf}) is divergent. The reason for the divergence of the series is the presence of the multiplier $\Gamma(\alpha n+1)/\Gamma(n+1)$ in this series. As we can see at $\alpha>1$ this multiplier is more than 1 and as  $n$ increases this multiplier only increases. This fact leads to the divergence of this series at  $n\to\infty$. There is also a multiplier $x^{-\alpha n-1}$ in this sum. The competition between these two multipliers leads to the observed behavior of the threshold coordinate. Indeed, at small $n$ the addition of terms in the sum (\ref{eq:g_NInf}) first leads to a decrease in the coordinate $x_\varepsilon^N$ at which the equality is achieved $|g(x,\alpha,\theta)-g_N^\infty(x,\alpha,\theta)|=\varepsilon$. This is confirmed by the fact that  $x_\varepsilon^3>x_\varepsilon^{10}>x_\varepsilon^{30}$. However, a further increase in $n$ leads to an even greater increase in the value of the multiplier $\Gamma(\alpha n+1)/\Gamma(n+1)$. Therefore, to compensate for the increase in this multiplier and achieve the specified level of accuracy, it is necessary to reduce the value of the multiplier $x^{-\alpha n-1}$, which is achieved by increasing the value of the coordinate $x$. As a result, the coordinate $x_\varepsilon^N$ is shifted towards greater values. This is confirmed by the fact that $x_\varepsilon^{30}<x_\varepsilon^{60}<x_\varepsilon^{90}$.

\section{Calculation of the probability density at large $x$}

We will return to the question of calculating the probability density at large values of $x$. In the Introduction it was pointed out that the main approach to calculate the probability density is to use the integral representation (\ref{eq:g(x)_int}). In theory, this integral representation is valid for all values of the parameters $\alpha,\theta$ (except for the value $\alpha=1$) and all $x$. However, in practice, it is not always possible to calculate numerically the integral included in this integral representation. Problems arise at small and large values of the coordinate $x$.  The reason for the difficulties that arise is the behavior of the integrand in the formula (\ref{eq:g(x)_int}).

\begin{figure}
  \centering
  \includegraphics[width=0.8\textwidth]{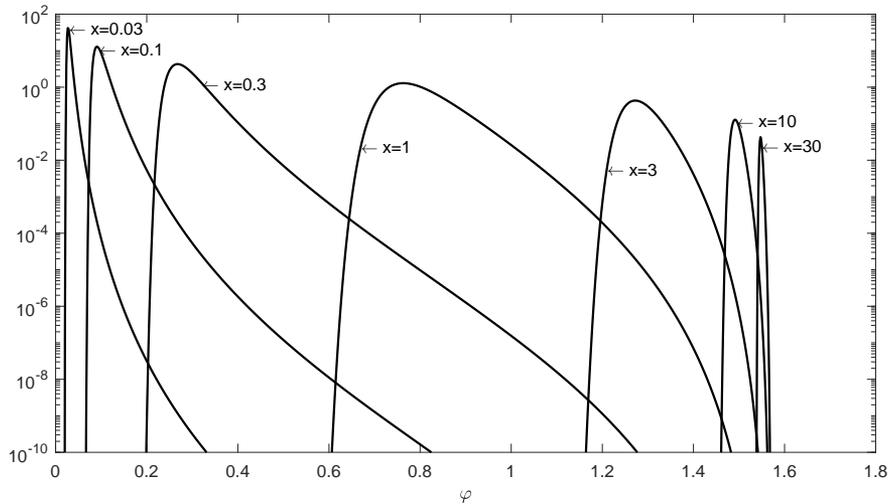}
  \caption{ The relationship between the integrand of the integral representation for the probability density (\ref{eq:g(x)_int}) and the integration variable $\varphi$. The figure shows graphs of the integrand for the values of the parameters $\alpha=1.1, \theta=0$ and the specified values of the coordinate $x$}\label{fig:pdfIntegrand}
\end{figure}

Figure~\ref{fig:pdfIntegrand} shows the graph of the relationship between the integrand in the formula (\ref{eq:g(x)_int}) and the integration variable $\varphi$ at different values of the coordinate $x$. One can see from the figure that at small and large values of $x$ the integrand turns into a function with a very narrow peak. As the value of $x$ decreases or increases, the width of this peak only decreases.  This behavior of the integrand leads to the fact that at very small or very large values of $x$ numerical integration algorithms cannot calculate the integral of this function. Therefore, in this case, it is expedient to use other approaches to calculate the probability density. The most appropriate approach is to use asymptotic expansions for the probability density. The problem of calculating the probability density in the case of $x\to0$ was considered in the article \cite{Saenko2022b}. This article deals with the problem of calculating the probability density at large values of $x$.

\begin{figure}
  \centering
  \includegraphics[width=0.48\textwidth]{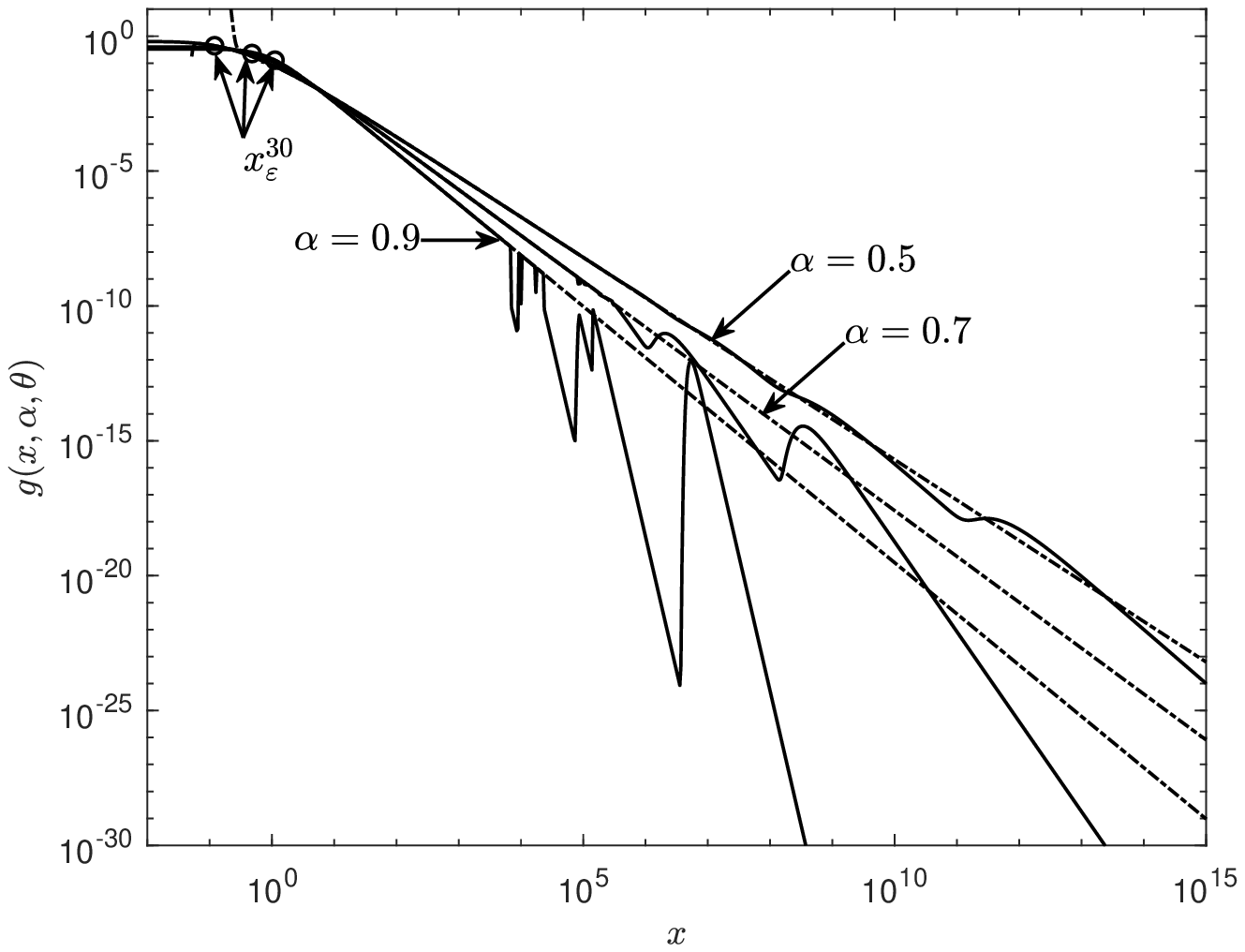}\hfill
  \includegraphics[width=0.48\textwidth]{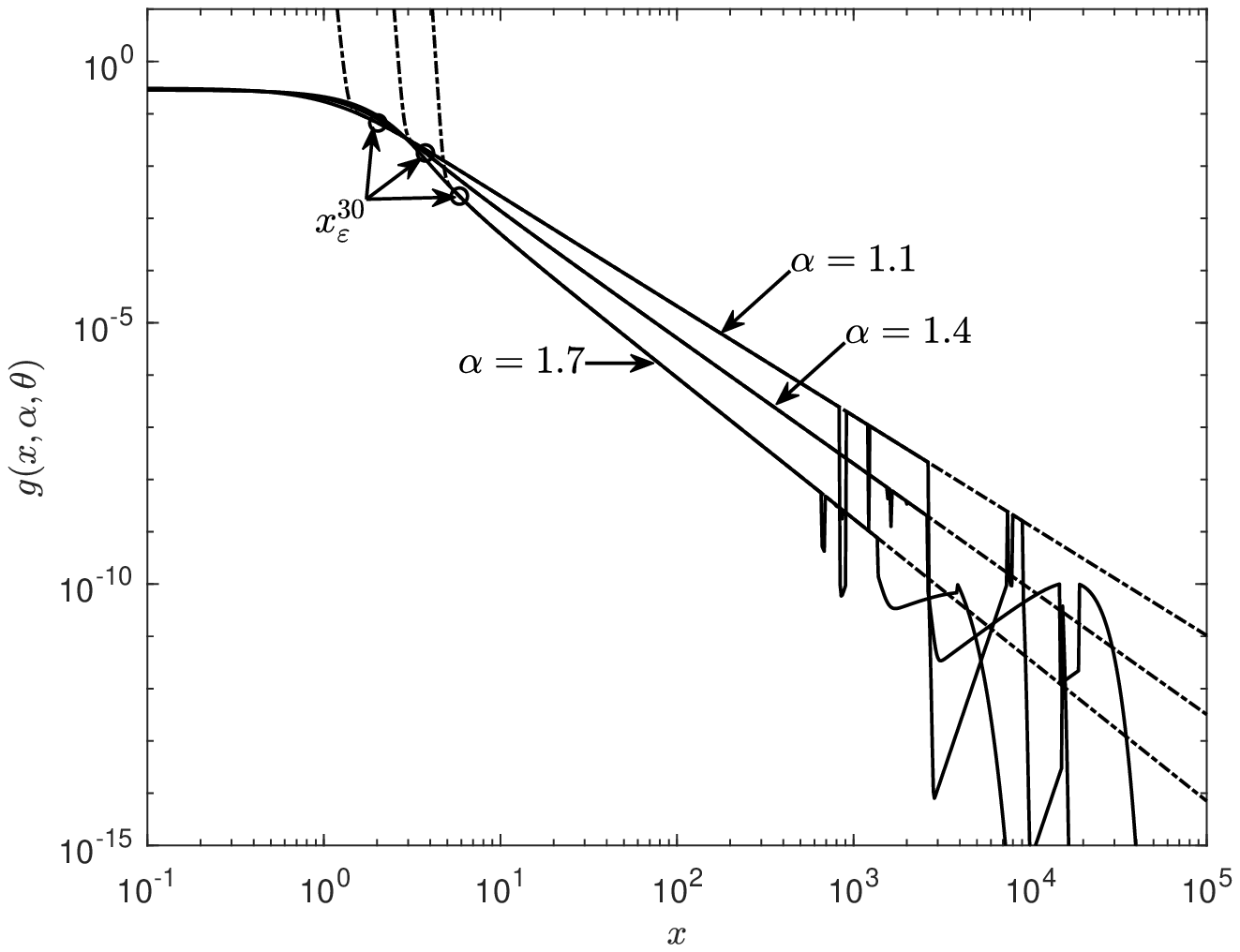}\\
  \caption{Probability density $g(x,\alpha,\theta)$ of a strictly stable law. The figure on the left is the case $\alpha<1$, the figure on the right is the case $\alpha>1$. The values of the index $\alpha$ are given in the figures. The solid curves are integral representation (\ref{eq:g(x)_int}), the dash-dotted curves are power series representation (\ref{eq:g_NInf_calc}), the circles are the position of the threshold coordinate $x_\varepsilon^N$ for each value $\alpha$, $N=30$ and $\varepsilon=10^{-5}$}\label{fig:pdf_x2inf}
\end{figure}

Figure~\ref{fig:pdf_x2inf} gives the results of calculating the probability density $g(x,\alpha,\theta)$ using the integral representation (\ref{eq:g(x)_int}) (solid curves) and using the formula (\ref{eq:g_NInf_calc}) (dash-dotted).  The values of the characteristic exponents $\alpha$ are given in the figures. To calculate the integral in the formula (\ref{eq:g(x)_int}) the Gauss-Kronrod algorithm was used.  To calculate the probability density using the formula (\ref{eq:g_NInf_calc}) the number of terms  $N=30$ was used. To find the value of the threshold coordinate $x_\varepsilon^N$ the value of the accuracy level $\varepsilon=10^{-5}$ was used. As we can see from the presented calculations, for large values of $x$ the numerical integration algorithm used is uncapable of calculating the integral in (\ref{eq:g(x)_int}) and starts producing incorrect values. One can also see from the figure that the value of the critical coordinate $x_{\mbox{\scriptsize cr}}$, at which the numerical integration algorithm begins to calculate the integral incorrectly, depends on the value of $\alpha$. For the value of $\alpha=0.5$ the value $x_{\mbox{\scriptsize cr}}\approx 10^7$,  for the value of $\alpha=0.7$ the value $x_{\mbox{\scriptsize cr}}\approx 10^5$,  for the value of $\alpha=0.9$ the value $x_{\mbox{\scriptsize cr}}\approx 10^4$. We can see that in the case $\alpha<1$ as the value of  $\alpha$ decreases, the value of $x_{\mbox{\scriptsize cr}}$ increases. In the case $\alpha>1$ from the figure we can see that for all presented $\alpha$ the value $x_{\mbox{\scriptsize cr}}\approx 4\cdot10^2 - 10^3$. Thus, at the values $x>x {\mbox{\scriptsize cr}}$ other methods should be used to calculate the probability density. The same problem exists for integral representations of the density of stable laws in other parameterizations of the characteristic function (see \cite{Nolan1997,Julian-Moreno2017,Royuela-del-Val2017,Ament2018}). To solve this problem, the authors used various numerical methods in these works, which make it possible to increase the accuracy of the calculation. However, these methods raise the accuracy of calculations, but do not solve the problem completely.

To solve the problem of calculating the probability density completely for large values of $x$ one can use theorem~\ref{theor:pdfExpansion} and, in particular, the formula (\ref{eq:g_NInf_calc}), which is the corollary of this theorem. As it was shown, the use of the formula (\ref{eq:g_NInf_calc}) guarantees that in the range of coordinates $x\geqslant x_\varepsilon^N$ the absolute error in calculating the probability density using this formula will not exceed the specified accuracy level $\varepsilon$. Moreover, as $x$ increases the absolute calculation error will only decrease. This follows directly from the estimate of the remainder term (\ref{eq:R_NInf}) and is clearly seen from the calculation results given in Figures~\ref{fig:fig_a07}b,~\ref{fig:fig_a1}b~and~\ref{fig:fig_a13}b.

In Fig.~\ref{fig:pdf_x2inf}  dash-dotted curves demonstrate the results of calculating the probability density using the formula (\ref{eq:g_NInf_calc}). The position of the threshold coordinate $x_\varepsilon^N$ for each value of $\alpha$ is shown by circles. The figure clearly shows from that in the domain $x_\varepsilon^N\leqslant x\leqslant x_{\mbox{\scriptsize cr}}$ the results of calculating the probability density using the integral representation (\ref{eq:g(x)_int}) and using the expansion in series (\ref{eq:g_NInf_calc}) coincide. At values $x>x_{\mbox{\scriptsize cr}}$ the numerical integration algorithm no longer makes it possible to obtain the correct value of the probability density using the integral representation (\ref{eq:g(x)_int}), while using the formulas (\ref{eq:g_NInf_calc}) does not lead to any difficulties in calculation. Consequently, in the domain $x>x_{\mbox{\scriptsize cr}}$ it is expedient to use the formula (\ref{eq:g_NInf_calc}) to calculate the probability density. Thus, the use of theorem~\ref{theor:pdfExpansion} and, in particular, the formula (\ref{eq:g_NInf_calc}) solves the problem of calculating the probability density for large values of $x$.

\section{Discussion}

The main problem in proving theorem~\ref{theor:pdfExpansion} was the calculation of the integral on the right side of the expression (\ref{eq:pdfExpan_tmp0}). Indeed, if to expand  the second exponent in a Taylor series in this expression, then we arrive at the integral
\begin{equation}\label{eq:auxInt}
\int_{0}^{\infty}e^{it}t^{\alpha k}dt.
\end{equation}
However, it is not possible to calculate this integral or bring this integral to one of the known integrals (in particular, to the Gamma function). The problem lies in the integration loop. The formula (\ref{eq:Gamma_intRepr1}) clearly shows this. This formula shows that in the definition of the Gamma function $\Gamma(z)=\int_{0}^{\infty} e^{-t} t^{\gamma-1}dt$ one can pass from the integration along the positive part of the real semiaxis to the integration along the ray coming from the origin of coordinates at the angle $\beta$. At the same time, the angle $\beta$ must satisfy the conditions: $-\frac{\pi}{2}<\beta<\frac{\pi}{2},\ \Re\gamma>0$ or $\beta\pm\frac{\pi}{2},\ 0<\Re\gamma<1$. As we can see, for the left part of the integral (\ref{eq:Gamma_intRepr1}) to take the form of the integral  (\ref{eq:auxInt}) it is necessary to set $\beta=-\frac{\pi}{2}$ and $c=1$. However, for this integral to correspond to the Gamma function in this case, the condition $0<\Re (\alpha k+1)<1$ must be satisfied, but this condition is not met at any $\alpha$. If we consider the condition $-\frac{\pi}{2}<\beta<\frac{\pi}{2},\ \Re\gamma>0$, then we see that it also  does not enable us to transform the integral (\ref{eq:auxInt}) to the Gamma function. Although in this case the condition $\Re (\alpha k+1)>0$ is satisfied but the range of the angle change $\beta$ does not include the point $\beta=-\frac{\pi}{2}$.  It is possible to show that in this case ($\Re\gamma>0$) at the points  $\beta=\pm\frac{\pi}{2}$ the integral of the left part (\ref{eq:Gamma_intRepr1}) will diverge. No transformation of the integral (\ref{eq:auxInt}) (in particular, by turning the integration contour) can transform this integral to the integral (\ref{eq:Gamma_intRepr1}).

It is possible to solve this problem if to take account that in the right part of the expression  (\ref{eq:pdfExpan_tmp0}) the relation $\tau/x\to0$ at $\tau\to\infty$ and $x\to\infty$. Taking account of this circumstance, it became possible to prove that if the conditions formulated in lemma~\ref{lem:LoopInt} are met, the equality (\ref{eq:loopInt_eq}) is satisfied. Lemma~\ref{lem:I_GammaR} is also devoted to the substantiation of the possibility of this equality. After proving the equality (\ref{eq:loopInt_eq}) it becomes possible to perform the passage from (\ref{eq:pdfExpan_tmp0}) to (\ref{eq:pdfExpan_tmp1}) and substantiate the possibility of choosing the argument $\varphi$ in the form (\ref{eq:varphi0}). After that, obtaining the expansion of the probability density at $x\to\infty$ in the form of a power series was not particularly difficult.

The main result of this article is formulated in theorem~\ref{theor:pdfExpansion}. In this theorem, in addition to the expansion of the probability density in a power series at $x\to\infty$, an estimate for the remainder term is also obtained. This estimate turned out to be very useful in solving the problem of calculating the probability density at large $x$. Indeed, the series (\ref{eq:g_NInf}) is valid at  $x\to\infty$, however, it does not give an answer to the question at which value of $x$ this series can be used to calculate the probability density. This question can be answered by evaluating the remainder term (\ref{eq:R_NInf}). This estimate makes it possible to introduce a threshold coordinate $x_\varepsilon^N$, at which the absolute error in calculating the probability density using the expansion (\ref{eq:g_NInf}) will not exceed the specified accuracy level $\varepsilon$. The threshold coordinate is found from the solution to the equation (\ref{eq:x_eps_eq}), in which the values $\alpha,\varepsilon$ and $N$ are given. Unfortunately, it is not possible to obtain the solution to the equation (\ref{eq:x_eps_eq}) in an explicit form. However, with the help of numerical methods, the solution to this equation can be found without any difficulties.

It should be noted that if to make an estimate of the remainder term (\ref{eq:R_NInf}) worse and represent it in the form
\begin{equation*}
  \left|R_N^{\infty}(x,\alpha,\theta)\right|\leqslant\frac{x^{-\alpha N-1}}{\pi N!}\left(\Gamma(\alpha N+1)+\Gamma(\alpha(N+1)+1)\right),\quad x>1,
\end{equation*}
then it gives an opportunity to find the explicit expression for the threshold coordinate
\begin{equation*}
  x_{\varepsilon,1}^N=\left(\frac{\Gamma(\alpha N+1)+\Gamma(\alpha(N+1)+1)}{\pi\varepsilon \Gamma(N+1)}\right)^{\frac{1}{\alpha N+1}}.
\end{equation*}
The convenience of the practical use of this expression is obvious: it is not necessary to resort to numerical methods every time and solve the equation (\ref{eq:x_eps_eq}) to find the value of the threshold coordinate. However, one always must pay for convenience, and the price here is the accuracy of determining the threshold coordinate. The estimate $x_{\varepsilon,1}^N$ is worse than the estimate $x_\varepsilon^N$ obtained from the solution to the equation (\ref{eq:x_eps_eq}).  In addition, it is valid only if the value $x_{\varepsilon,1}^N>1$. However, the choice of one or another formula always remains with a particular researcher.

\section{Conclusion}

Being the limiting distributions of sums of independent identically distributed random variables, stable laws are used in the description of various processes. They appear when describing the behavior of stock indices, the asymptotic distribution of the particle coordinate in the process of anomalous diffusion, when describing the distribution of gene expression levels, and so on. In this regard, there is often a necessity to calculate the probability density of this law. The main method for calculating the probability density is the use of integral representations. However, as shown in the article, this method is not always capable of giving the correct value of the probability density. Difficulties arise for small and large values of the coordinate $x$. They are connected not with the integral representation, the integral representation itself is valid for any $x$, but with numerical methods. Numerical integration algorithms turn out to be unable to calculate the integral in this formula for very small and very large values of $x$. Therefore, in these cases it is necessary to use other methods for calculating the probability density.

As it was mentioned in the Introduction, there are various methods to solve this problem, but all these methods are aimed at increasing the accuracy of calculations and do not solve the problem completely. In this article, to calculate the probability density for large $x$ the expansion of the probability density in a series at $x\to\infty$ is used. To achieve this goal, such a decomposition was obtained, as well as an estimate for the remainder term. The results were formulated in theorem~\ref{theor:pdfExpansion}. The convergence of this series was studied and it was shown that in the case of $\alpha<1$ the series was convergent at $N\to\infty$ for any $x$, in the case $\alpha=1$ the series was convergent at $N\to\infty$  and $|x|>1$, and in the case $\alpha>1$ the series was asymptotic at $x\to\infty$. These results were formulated in corollary~\ref{corol:pdfCoverg}. It should be noted that the results formulated in this corollary are not new. They generalize the known facts previously obtained by various authors regarding the convergence of the expansion of the probability density in a power series at $x\to\infty$ (see, for example, \cite{Zolotarev1986,Uchaikin1999,Feller1971_V2_en}). However, the study of the expansion of the probability density in the case $\alpha=1$ was carried out for the first time.  It was obtained that in this case the series was convergent at $|x|>1$. In addition, we managed to show that in this case at $N\to\infty$ the series converge to the probability density of the generalized Cauchy distribution (\ref{eq:g(x)_a=1}).

As it was noted, an estimate was obtained for the remainder term of the expansion of the probability density in a power series at $x\to\infty$. This estimate turned out to be very useful in solving the problem of calculating the probability density at large $x$. Using this estimate it was possible to introduce the threshold coordinate $x_\varepsilon^N$ and obtain the equation (\ref{eq:x_eps_eq}) for finding it. Threshold coordinates gives an opportunity to determine the domain of coordinates $x$ within which the absolute error of calculating the probability density using a power series  (\ref{eq:g_NInf}) at given value $\alpha$  and the number of summands  $N$ in the sum will not exceed the required level of accuracy $\varepsilon$. It enabled us to write a formula for calculation in the form  (\ref{eq:g_NInf_calc}). The calculations performed showed that when using this formula, the absolute error in calculating the probability density in the domain $|x|>x_\varepsilon^N$ does not exceed the required level of accuracy $\varepsilon$, and in reality is considerably less than this value. With the increase in $|x|$ the absolute calculation error only decreases.  This gives an opportunity to use the formula (\ref{eq:g_NInf_calc}) to calculate the probability density even  at those values of $x$, at which the integral representation  (\ref{eq:g(x)_int}) no longer makes it possible to obtain the correct result.

Indeed, the calculations performed showed that for the integral representation (\ref{eq:g(x)_int}) there was a critical values of the coordinate $x_{\mbox{\scriptsize cr}}$ at which numerical integration algorithms were no longer able to correctly calculate integral (see Fig.~\ref{fig:pdf_x2inf}). At the same time the use of the formula (\ref{eq:g_NInf_calc}) does not lead to any calculation difficulties. Thus, the use of this formula to calculate the probability density of a strictly stable law in the coordinate region $|x|>x_{\mbox{\scriptsize cr}}$, in which the use of the integral representation (\ref{eq:g(x)_int}) no longer leads to the correct result, solves the problem of calculating the probability density at large $x$.

As it was noted above, when using integral representations to calculate the probability density, difficulties in calculation arise both for large values of $x$, and small values of $x$. The problem of calculating the probability density at large $x$ was solved in this article, and the problem of calculating the probability density at small $x$ was solved in the article \cite{Saenko2022b}. To calculate the probability density at small $x$ in this article the expansion of the density at $x\to0$ and the estimate for the remainder term of this expansion were obtained.  Based on this estimate, an explicit expression for the threshold coordinate was obtained. Thus, if for large values of $x$ to use the formula (\ref{eq:g_NInf_calc}), at small values of $x$ to use the expansion from the article\cite{Saenko2022b}, and in the intermediate domain to use the integral representation (\ref{eq:g(x)_int}), then in this case we have an opportunity to calculate the probability density of a strictly stable law with a characteristic function (\ref{eq:CF_formC}) at any $x$.  Thus, the problem of calculating the probability density on the entire real line $x$ turns out to solved.

The similar problem exists when calculating the distribution function. In the article \cite{Saenko2022b} it was shown that for the integral representation of the distribution function of a strictly stable law, there are also problems with calculating the integral for small and large values of the coordinate $x$. In this article it was possible to solve the problem of calculating the distribution function at small values of  $x$. For this purpose, we used the expansion of the distribution function at $x\to0$. To solve the problem of calculating the distribution function completely, it remains to solve the problem of calculating the distribution function for large $x$. This problem can be solved if we get the expansion of the distribution function at $x\to\infty$. To obtain this expansion is possible if we use the result of theorem~\ref{theor:pdfExpansion}. However, this requires additional studies the results of which will be published in the nearest future.

\appendix
\section{Integral representations of the probability density}\label{sec:IntRepr}

To perform the inverse Fourier transform and obtain the distribution of the probability density, the following lemma is useful, which defines the inversion formula
\begin{lemma}\label{lem:Inverse}
The distribution of the probability density $g(x,\alpha,\theta)$ for any admissible set of parameters $(\alpha,\theta)$ and any $x$ can be obtained using inverse transformation formulas
\begin{equation}\label{eq:InverseFormula}
  g(x,\alpha,\theta)=\frac{1}{2\pi}\int_{-\infty}^{\infty}e^{-itx}\hat{g}(t,\alpha,\theta)dt=
  \left\{\begin{array}{c}
           \displaystyle\frac{1}{\pi}\Re\int_{0}^{\infty} e^{itx}\hat{g}(t,\alpha,-\theta)dt,  \\
           \displaystyle\frac{1}{\pi}\Re\int_{0}^{\infty} e^{-itx}\hat{g}(t,\alpha,\theta)dt.
         \end{array}\right.
\end{equation}
\end{lemma}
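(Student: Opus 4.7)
The plan is to derive both equalities in (\ref{eq:InverseFormula}) from the classical Fourier inversion theorem, using two structural facts about the characteristic function (\ref{eq:CF_formC}): (i) the density $g(x,\alpha,\theta)$ is real-valued, hence $\hat{g}(-t,\alpha,\theta)=\overline{\hat{g}(t,\alpha,\theta)}$; and (ii) Property~\ref{prop:Inversion}, which in the form (\ref{eq:InversionFormula}) asserts $\hat{g}(-t,\alpha,\theta)=\hat{g}(t,\alpha,-\theta)$.

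First I would observe that for any admissible $(\alpha,\theta)$ the modulus satisfies $|\hat{g}(t,\alpha,\theta)|=\exp\{-|t|^{\alpha}\cos(\tfrac{\pi}{2}\alpha\theta)\}$ with $\cos(\tfrac{\pi}{2}\alpha\theta)>0$, so $\hat{g}(\cdot,\alpha,\theta)\in L^{1}(\mathbb{R})$. The classical Fourier inversion theorem then immediately yields the leftmost equality
\begin{equation*}
  g(x,\alpha,\theta)=\frac{1}{2\pi}\int_{-\infty}^{\infty}e^{-itx}\hat{g}(t,\alpha,\theta)dt.
\end{equation*}

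Next I would split this integral as $\int_{-\infty}^{0}+\int_{0}^{\infty}$, change the variable $t\mapsto -t$ in the first piece, and apply the conjugate symmetry $\hat{g}(-t,\alpha,\theta)=\overline{\hat{g}(t,\alpha,\theta)}$ coming from the reality of $g$. The two halves combine into
\begin{equation*}
  g(x,\alpha,\theta)=\frac{1}{2\pi}\int_{0}^{\infty}\left(e^{-itx}\hat{g}(t,\alpha,\theta)+\overline{e^{-itx}\hat{g}(t,\alpha,\theta)}\right)dt=\frac{1}{\pi}\Re\int_{0}^{\infty}e^{-itx}\hat{g}(t,\alpha,\theta)dt,
\end{equation*}
which is the lower of the two right-hand expressions in (\ref{eq:InverseFormula}). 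To obtain the upper one, I would invoke $\Re z=\Re\overline{z}$ and substitute the combined identity $\overline{\hat{g}(t,\alpha,\theta)}=\hat{g}(-t,\alpha,\theta)=\hat{g}(t,\alpha,-\theta)$, where the second equality is precisely Property~\ref{prop:Inversion}. This converts $\overline{e^{-itx}\hat{g}(t,\alpha,\theta)}$ into $e^{itx}\hat{g}(t,\alpha,-\theta)$ and produces the remaining form.

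The argument requires no delicate analysis: the only point that merits explicit verification is that $\cos(\tfrac{\pi}{2}\alpha\theta)$ is strictly positive throughout the admissible region $|\theta|\le\min(1,2/\alpha-1)$, which secures the absolute integrability of $\hat{g}$ and legitimises the appeal to the classical inversion theorem. Apart from this integrability check, the lemma is proved by a routine change of variables combined with the characteristic-function inversion identity~(\ref{eq:InversionFormula}), so the main ingredient is algebraic rather than analytic and no genuine obstacle is anticipated.
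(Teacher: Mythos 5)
Your argument is correct, and there is nothing in the paper to compare it against in detail: the paper does not prove Lemma~\ref{lem:Inverse} in the text, it only refers the reader to \cite{Saenko2020b} for the proof. The three ingredients you use --- absolute integrability of $\hat{g}$, the conjugate symmetry $\hat{g}(-t,\alpha,\theta)=\overline{\hat{g}(t,\alpha,\theta)}$ (which follows either from the reality of the density or directly from (\ref{eq:CF_formC})), and the identity $\hat{g}(-t,\alpha,\theta)=\hat{g}(t,\alpha,-\theta)$ of Property~\ref{prop:Inversion} --- are exactly what is needed, and the step $\Re z=\Re\overline{z}$ correctly converts the second right-hand form of (\ref{eq:InverseFormula}) into the first. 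The one inaccuracy is your claim that $\cos(\tfrac{\pi}{2}\alpha\theta)$ is strictly positive on the whole admissible region: since $|\tfrac{\pi}{2}\alpha\theta|\leqslant\tfrac{\pi}{2}\min(\alpha,2-\alpha)$ for $\alpha\neq1$ and $|\tfrac{\pi}{2}\theta|\leqslant\tfrac{\pi}{2}$ for $\alpha=1$, the cosine vanishes exactly at the corner $\alpha=1$, $\theta=\pm1$, where $\hat{g}(t,1,\pm1)=e^{\pm it}$ is not integrable and the law is degenerate (a unit point mass), so no density exists and the lemma cannot hold as stated. This boundary case must simply be excluded --- as the paper itself does elsewhere, e.g.\ in Theorem~\ref{theor:pdfExpansion} --- and for every other admissible pair $(\alpha,\theta)$ your integrability check and hence the whole argument go through.
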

The proof of this lemma is given in the article \cite{Saenko2020b}.
Integral representation for the probability density of a strictly stable law with a characteristic function  (\ref{eq:CF_formC}) was obtained in the article\cite{Saenko2020b}. The following theorem was formulated and proved in this article.

\begin{theorem}\label{theorem:SSL_pdf}
The distribution density $g(x,\alpha,\theta)$ of a strictly stable law with the characteristic function (\ref{eq:CF_formC})  can be represented in the form
  \begin{enumerate}
    \item If $\alpha\neq1$ and $x\neq0$, for any values $|\theta|\leqslant\min(1,2/\alpha-1)$
    \begin{equation}\label{eq:g(x)_int}
      g(x,\alpha,\theta)=\frac{\alpha}{\pi|\alpha-1|}\int_{-\pi\theta^*/2}^{\pi/2}\exp\left\{-|x|^{\alpha/(\alpha-1)} U(\varphi,\alpha,\theta^*)\right\} U(\varphi,\alpha,\theta^*) |x|^{1/(\alpha-1)}d\varphi,
    \end{equation}
    where $\theta^*=\theta\sign{x}$ and
    \begin{equation}\label{eq:U_func}      U(\varphi,\alpha,\theta)=\left(\frac{\sin\left(\alpha\left(\varphi+\frac{\pi}{2}\theta\right)\right)}{\cos\varphi}\right)^{\alpha/(1-\alpha)} \frac{\cos\left(\varphi(1-\alpha)-\frac{\pi}{2}\alpha\theta\right)}{\cos\varphi}.
    \end{equation}
    \item If $x=0$, then for any $0<\alpha\leqslant2$ and $|\theta|\leqslant\min(1,2/\alpha-1)$
    \begin{equation*}
  g(0,\alpha,\theta)=\frac{1}{\pi}\cos\left(\frac{\pi\theta}{2}\right)\Gamma\left(\frac{1}{\alpha}+1\right)
    \end{equation*}
    \item If $\alpha=1$, then for any $|\theta|\leqslant1$ and any values $x$
    \begin{equation}\label{eq:g(x)_a=1}
      g(x,1,\theta)=\frac{\cos(\pi\theta/2)}{\pi(x^2-2x\sin(\pi\theta/2)+1)}.
    \end{equation}
  \end{enumerate}
\end{theorem}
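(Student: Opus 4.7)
The plan is to reduce every case to the inversion formula from Lemma~\ref{lem:Inverse} and treat the three regimes in order of increasing difficulty: $\alpha=1$ (part 3) is a direct Fourier integral, $x=0$ (part 2) is a Gamma-function calculation, and $\alpha\neq1$ with $x\neq0$ (part 1) requires deforming the inversion contour onto the steepest-descent curve in the complex $t$-plane.

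For the two ``easy'' cases I would proceed by direct calculation. For part 3, the second formula of (\ref{eq:InverseFormula}) gives
\begin{equation*}
  g(x,1,\theta)=\frac{1}{\pi}\Re\int_{0}^{\infty}\exp\left\{-t\cos(\tfrac{\pi}{2}\theta)+it\bigl(\sin(\tfrac{\pi}{2}\theta)-x\bigr)\right\}dt=\frac{1}{\pi}\Re\frac{1}{\cos(\tfrac{\pi}{2}\theta)-i(\sin(\tfrac{\pi}{2}\theta)-x)},
\end{equation*}
which converges because $\cos(\pi\theta/2)>0$ whenever $|\theta|<1$, and taking the real part produces (\ref{eq:g(x)_a=1}) directly (the extreme values $\theta=\pm1$ give degenerate point masses and must be handled as a separate limiting case). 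For part 2, setting $x=0$ in the first line of (\ref{eq:InverseFormula}) and substituting $u=t^\alpha$ turns the integral into one of the form (\ref{eq:Gamma_intRepr1}) with $\gamma=1/\alpha$, $c=1$, $\beta=\pi\alpha\theta/2$. The admissibility condition $|\theta|\leqslant\min(1,2/\alpha-1)$ forces $|\beta|\leqslant\pi/2$ with strict inequality everywhere except the degenerate corner $(\alpha,\theta)=(1,\pm1)$, so (\ref{eq:Gamma_intRepr1}) applies and yields $\Gamma(1/\alpha)\cos(\pi\theta/2)/(\pi\alpha)$; the identity $\Gamma(1/\alpha)/\alpha=\Gamma(1/\alpha+1)$ then delivers the claimed value.

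Part 1 is the substantial piece. After using Property~\ref{prop:Inversion} to reduce to $x>0$ (so that $\theta^{*}=\theta$), I would start from
\begin{equation*}
  g(x,\alpha,\theta)=\frac{1}{\pi}\Re\int_{0}^{\infty}\exp\bigl\{itx-t^\alpha e^{i\pi\alpha\theta/2}\bigr\}dt
\end{equation*}
and deform the contour from the positive real axis onto the steepest-descent curve $t=r(\varphi)e^{i\varphi}$ parameterised by the argument $\varphi$. Requiring the imaginary part of the exponent to vanish on this curve yields
\begin{equation*}
  r(\varphi)=\left(\frac{x\cos\varphi}{\sin(\alpha\varphi+\pi\alpha\theta/2)}\right)^{1/(\alpha-1)},
\end{equation*}
and a short trigonometric manipulation based on $\cos\varphi\cos\psi+\sin\varphi\sin\psi=\cos(\psi-\varphi)$ identifies the remaining real part of the exponent with $-x^{\alpha/(\alpha-1)}U(\varphi,\alpha,\theta)$, reproducing (\ref{eq:U_func}). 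Computing the Jacobian of $t\mapsto\varphi$ along the curve and combining it with the prefactor $|x|^{1/(\alpha-1)}$ supplies the factor $\alpha/|\alpha-1|$ (the sign of $\alpha-1$ is absorbed into the direction in which the $\varphi$-interval is traversed, which is why the endpoints $[-\pi\theta/2,\pi/2]$ are the same in both regimes $\alpha<1$ and $\alpha>1$).

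The main obstacle will be rigorously justifying the contour deformation. One must verify, via a Jordan-type estimate in the spirit of Lemma~\ref{lem:I_GammaR} but carried out with $x$ fixed and $t$ ranging over auxiliary arcs at infinity and near the origin, that no contribution is picked up when closing the contour between the positive real axis and the steepest-descent curve. This splits into two separate auxiliary-arc arguments for $\alpha<1$ and $\alpha>1$, because in those two regimes decay in the $itx$ term and decay in the $t^\alpha$ term dominate in different sectors of the complex plane. Once the deformation is justified, identifying the correct $\varphi$-endpoints reduces to analysing the geometric shape of the steepest-descent curve: the endpoint $\varphi=-\pi\theta/2$ corresponds to the direction along which $r(\varphi)\to0$ (the root of $\sin(\alpha\varphi+\pi\alpha\theta/2)$ meeting the positive real axis at the origin), and $\varphi=\pi/2$ corresponds to the direction along which $r(\varphi)\to\infty$.
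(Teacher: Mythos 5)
The paper does not prove this theorem itself: it states only that the proof is given in \cite{Saenko2020b}, which (as the Introduction notes) obtains these integral representations by the stationary phase method — essentially the route you propose. Your parts 2 and 3 are correct direct computations, and for part 1 the key formulas check out (the curve $r(\varphi)=\bigl(x\cos\varphi/\sin(\alpha\varphi+\tfrac{\pi}{2}\alpha\theta)\bigr)^{1/(\alpha-1)}$, the identification of the real part of the exponent with $-|x|^{\alpha/(\alpha-1)}U(\varphi,\alpha,\theta)$, and the Jacobian identity $r'\cos\varphi-r\sin\varphi=-\tfrac{\alpha}{\alpha-1}|x|^{1/(\alpha-1)}U$), leaving only the contour-deformation justification that you already flag as the remaining work.
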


The proof of the theorem is given in the article \cite{Saenko2020b}. As we can see, in the general case, the probability density is expressed in terms of a definite integral. However, in the particular case  $\alpha=1$ it is possible to perform the inverse Fourier transform of the characteristic function  (\ref{eq:CF_formC}) and express the probability density in terms of elementary functions. The formula (\ref{eq:g(x)_a=1}) generalizes the well-known Cauchy distribution for the case of an arbitrary asymmetry parameter $-1\leqslant\theta\leqslant1$. As we can see, at $\theta=0$ this distribution coincides with the Cauchy distribution and at $\theta=\pm1$ this distribution goes into a degenerate distribution at points $x\pm1$. This distribution first appeared in the book by V.M. Zolotarev \cite{Zolotarev1986}. Later this distribution was obtained and studied in the articles \cite{Saenko2020c,Saenko2020b}.
\bibliographystyle{elsarticle-num}
\bibliography{d:/bibliography/library}
\end{document}